\begin{document}

\theoremstyle{plain}
\newtheorem{thm}{Theorem}[section]
\newtheorem{theorem}[thm]{Theorem}
\newtheorem{lemma}[thm]{Lemma}
\newtheorem{corollary}[thm]{Corollary}
\newtheorem{proposition}[thm]{Proposition}
\newtheorem{conjecture}[thm]{Conjecture}
\theoremstyle{definition}
\newtheorem{construction}[thm]{Construction}
\newtheorem{notations}[thm]{Notations}
\newtheorem{question}[thm]{Question}
\newtheorem{problem}[thm]{Problem}
\newtheorem{remark}[thm]{Remark}
\newtheorem{remarks}[thm]{Remarks}
\newtheorem{definition}[thm]{Definition}
\newtheorem{claim}[thm]{Claim}
\newtheorem{assumption}[thm]{Assumption}
\newtheorem{assumptions}[thm]{Assumptions}
\newtheorem{properties}[thm]{Properties}
\newtheorem{example}[thm]{Example}
\newtheorem{comments}[thm]{Comments}
\newtheorem{blank}[thm]{}
\newtheorem{observation}[thm]{Observation}
\newtheorem{defn-thm}[thm]{Definition-Theorem}

\newcommand{\sM}{{\mathcal M}}


\title[Recursion formulae of higher Weil-Petersson volumes]{Recursion formulae of higher Weil-Petersson volumes}
        \author{Kefeng Liu}
        \address{Center of Mathematical Sciences, Zhejiang University, Hangzhou, Zhejiang 310027, China;
                Department of Mathematics,University of California at Los Angeles,
                Los Angeles, CA 90095-1555, USA}
        \email{liu@math.ucla.edu, liu@cms.zju.edu.cn}
        \author{Hao Xu}
        \address{Center of Mathematical Sciences, Zhejiang University, Hangzhou, Zhejiang 310027, China}
        \email{haoxu@cms.zju.edu.cn}

        \begin{abstract}
   In this paper we study effective recursion formulae for computing intersection numbers
   of mixed $\psi$ and $\kappa$ classes on moduli spaces of curves. By using the celebrated Witten-Kontsevich theorem, we generalize Mulase-Safnuk form of Mirzakhani's
 recursion and prove a recursion formula of higher Weil-Petersson
 volumes. We also present recursion formulae to compute intersection pairings
 in the tautological rings of moduli spaces of curves.
        \end{abstract}
    \maketitle

\section{Introduction}

We denote by $\overline{\sM}_{g,n}$ the moduli space of stable
$n$-pointed genus $g$ complex algebraic curves. We have the morphism
that forgets the last marked point,
$$
\pi_{n+1}: \overline{\sM}_{g,n+1}\longrightarrow
\overline{\sM}_{g,n}.
$$
Denote by $\sigma_1,\dots,\sigma_n$ the canonical sections of $\pi$,
and by $D_1,\dots,D_n$ the corresponding divisors in
$\overline{\sM}_{g,n+1}$. Let $\omega_{\pi}$ be the relative
dualizing sheaf, we have the following tautological classes on
moduli spaces of curves.
\begin{align*}
\psi_i&=c_1(\sigma_i^*(\omega_{\pi}))\\
\kappa_i&=\pi_*\left(c_1\left(\omega_{\pi}\left(\sum D_i\right)\right)^{i+1}\right)\\
\lambda_l&=c_l(\pi_*(\omega_{\pi})),\quad 1\leq l\leq g.
\end{align*}
The classes $\kappa_i$ were first defined by Mumford \cite{Mu} on
$\overline{\sM}_g$. Their generalization to $\overline{\sM}_{g,n}$
here is due to Arbarello-Cornalba \cite{Ar-Co, Ar-Co2}. Before that
time, the classes $\kappa_i$ were defined  as $\pi_*(c_1
(\omega_\pi)^{i+1})$. Arbarello-Cornalba's definition turned out to
be the correct one especially from the point of view of the
restrictions to the boundary strata.

We are interested in the following intersection numbers
$$\langle\kappa_{b_1}\cdots\kappa_{b_k}\tau_{d_1}\cdots\tau_{d_n}\rangle_g:=
\int_{\overline{\sM}_{g,n}}\kappa_{b_1}\cdots\kappa_{b_k}\psi_1^{d_1}\cdots\psi_n^{d_n},$$
where $\sum b_j+\sum d_j=3g-3+n$. When $d_1=\cdots =d_n=0$, these
intersection numbers are called the higher Weil-Petersson volumes of
moduli spaces of curves.

The fact that intersection numbers involving both $\kappa$ classes
and $\psi$ classes can be reduced to intersection numbers involving
only $\psi$ classes was already known to Witten [9], and has been
developed by Arbarello-Cornalba \cite{Ar-Co}, Faber \cite{Fa} and
Kaufmann-Manin-Zagier \cite{KMZ} into a beautiful combinatorial
formalism. Faber has a wonderful maple program computing these
intersection numbers.

In a series of innovative papers \cite{Mir,Mir2}, Mirzakhani
obtained a beautiful recursion formula of the Weil-Petersson volumes
of the moduli spaces of bordered Riemann surfaces. As discussed by
Mulase and Safnuk in \cite{MS, Saf}, Mirzakhani's recursion formula
is equivalent to the following enlightening recursion relation of
intersection numbers.
\begin{multline*}
  (2k_1+1)!!\langle\kappa_1^{k_0}\tau_{k_1}\cdots\tau_{k_n}\rangle_g \\
  =  \sum_{j=2}^{n} \sum_{l=0}^{k_0}\frac{k_0!}{(k_0-l)!} \frac{(2(l+k_1+k_j)-1)!!}{(2k_j-1)!!} \beta_l \langle\kappa_1^{k_0-l}\tau_{k_1+k_j+l-1}\prod_{i \neq 1,j}\tau_{k_i}\rangle_g \\
 + \frac{1}{2} \sum_{l=0}^{k} \sum_{d_1 + d_2 = l+k_1 - 2}  \frac{k_0!}{(k_0-l)!}
  (2d_1+1)!! (2d_2+1)!! \beta_l \langle\kappa_1^{k_0-l}\tau_{d_1}\tau_{d_2}\prod_{i\neq 1}\tau_{k_i}\rangle_{g-1}  \\
  + \frac{1}{2}\sum_{\substack{m_0+n_0 = k_0-l \\ I \coprod J = \{2, \ldots, n \}}}
  \sum_{l=0}^{k_0}\sum_{d_1 + d_2 = l+k_1-2}  \frac{k_0!}{m_0! n_0!}
 (2d_1+1)!!(2d_2+1)!! \beta_l \\
  \times \langle\kappa_1^{m_0}\tau_{d_1}\prod_{i\in I} \tau_{k_i}\rangle_{g'}
 \langle\kappa_1^{n_0}\tau_{d_2}\prod_{i\in J} \tau_{k_i}\rangle_{g-g'},
\end{multline*}
where
\begin{equation*}
\beta_l = (2^{2l+1}-4)\frac{\zeta(2l)}{(2\pi^2)^l} =
(-1)^{l-1}2^l(2^{2l}-2) \frac{B_{2l}}{(2l)!}.
\end{equation*}

In a previous paper \cite{LX3}, it is shown that the
Witten-Kontsevich theorem implies the Mulase-Safnuk form of
Mirzakhani's recursion formula. Its relationship with matrix
integrals has been studied by Eynard and Orantin \cite{EO,Ey}.

More discussions about computations of Weil-Petersson or higher
Weil-Petersson volumes can be found in the papers \cite{Gr, KK, KMZ,
MZ, Pe, ST, Wo, Zo}.

Now we fix notation as in \cite{KMZ}. Consider the semigroup
$N^\infty$ of sequences ${\bold m}=(m(1),m(2),\dots)$ where $m(i)$
are nonnegative integers and $m(i)=0$ for sufficiently large $i$.
Denote by $\bm{\delta}_a$ the sequence with 1 at the $a$-th place
and zeros elsewhere.

Let $\bold m, \bold t, \bold{a_1,\dots,a_n} \in N^\infty$, $\bold
m=\sum_{i=1}^n \bold{a_i}$, and $\bold s:=(s_1,s_2,\dots)$ be a
family of independent formal variables.
$$|\bold m|:=\sum_{i\geq 1}i m(i),\quad ||\bold m||:=\sum_{i\geq1}m(i),\quad \bold s^{\bold m}:=\prod_{i\geq 1}s_i^{m(i)},\quad \bold m!:=\prod_{i\geq1}m(i)!,$$
$$\binom{\bold m}{\bold{t}}:=\prod_{i\geq1}\binom{ m(i)}{t(i)},\quad \binom{\bold m}{\bold{a_1,\dots,a_n}}:=\prod_{i\geq1}\binom{ m(i)}{a_1(i),\dots,a_n(i)}.$$

Let $\bold b\in N^\infty$, we denote a formal monomial of $\kappa$
classes by
$$\kappa(\bold b):=\prod_{i\geq1}\kappa_i^{b(i)}.$$

\begin{theorem} Let
$\bold b\in N^\infty$ and $d_j\geq 0$. Then
\begin{multline}
\sum_{\bold L+\bold{L'}=\bold b}(-1)^{||\bold L||}\binom{\bold
b}{\bold L}\frac{(2d_1+2|\bold L|+1)!!}{(2|\bold L|+1)!!}
\langle\kappa(\bold L')\tau_{d_1+|\bold L|}\prod_{j=2}^n\tau_{d_j}\rangle_g\\
=\sum_{j=2}^n\frac{(2(d_1+d_j)-1)!!}{(2d_j-1)!!}\langle\kappa(\bold{b})\tau_{d_1+d_j-1}\prod_{i\neq
1,j}\tau_{d_i}\rangle_g\\
+\frac{1}{2}\sum_{r+s=|d_1|-2}(2r+1)!!(2s+1)!!\langle\kappa(\bold{b})\tau_r\tau_s\prod_{i\neq1}\tau_{d_i}\rangle_{g-1}\\
+\frac{1}{2}\sum_{\substack{\bold{e}+\bold{f}=\bold b\\I\coprod
J=\{2,\dots,n\}}}\sum_{r+s=d_1-2}\binom{\bold b}{\bold{e}}(2r+1)!!(2s+1)!!\\
\times \langle\kappa(\bold{e})\tau_r\prod_{i\in
I}\tau_{d_i}\rangle_{g'}\langle\kappa(\bold{f})\tau_s\prod_{i\in
J}\tau_{d_i}\rangle_{g-g'}.
\end{multline}
\end{theorem}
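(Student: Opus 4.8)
The plan is to derive the identity from the Witten--Kontsevich theorem by combining the pure $\psi$-class recursion with the classical dictionary that converts $\kappa$ classes into $\psi$ classes on moduli spaces with extra marked points. Recall that, for the Arbarello--Cornalba classes, $\kappa_a=\pi_*(\psi_{n+1}^{a+1})$ and $\pi^*\kappa_a=\kappa_a-\psi_{n+1}^{a}$; since $\psi_{n+1}$ restricts to zero on each section divisor $D_i$, iterating the projection formula shows that every mixed number $\langle\kappa(\bold b)\tau_{d_1}\cdots\tau_{d_n}\rangle_g$ is a universal $\mathbb Z$-linear combination of pure $\psi$-numbers with at most $||\bold b||$ additional marked points. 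In generating-function language (Kaufmann--Manin--Zagier) this amounts to the substitution $t_j\mapsto t_j+p_{j-1}(\bold s)$ for $j\ge 2$ in the Witten--Kontsevich free energy, where the universal polynomials $p_j(\bold s)$ are read off from $\log\big(1+\sum_i s_i z^i\big)=\sum_j p_j(\bold s)z^j$, so that inserting $\sum_j p_j\tau_{j+1}$ an arbitrary number of times reconstitutes $\exp(\sum_i s_i\kappa_i)$. I will take this conversion, together with the DVV form of the Witten--Kontsevich recursion for pure $\psi$-numbers, as the two inputs; note that the case $\bold b=0$ of the desired identity is exactly that recursion.

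The argument then proceeds in three steps. First, expand each term $\langle\kappa(\bold b)\tau_{d_1}\prod_{j\ge2}\tau_{d_j}\rangle_g$, as well as each left-hand term $\langle\kappa(\bold L')\tau_{d_1+|\bold L|}\cdots\rangle_g$, into pure $\psi$-numbers on some $\overline{\sM}_{g,n+m}$, and apply the DVV recursion at the marked point carrying the first $\psi$-power. The resulting terms split into four types according to what this point is joined to: (i) an original point $j\in\{2,\dots,n\}$; (ii) itself, decreasing the genus; (iii) itself, separating the surface; (iv) one of the $m$ auxiliary points coming from the $\kappa$-conversion. Second, re-sum types (i) and (ii): joining to an original point or going non-separating leaves all auxiliary points on a single connected component, so re-summing over $m$ and the auxiliary weights puts back $\kappa(\bold b)$ untouched, the DVV coefficients becoming $(2(d_1+d_j)-1)!!/(2d_j-1)!!$ and $(2r+1)!!(2s+1)!!$; these give the first two terms on the right. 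Third, in type (iii) the $m$ auxiliary points are distributed between the two components, and since the generating series of the $p_j$ is multiplicative, re-summing yields a product of two $\kappa$-decorated numbers; extracting the coefficient of $\bold s^{\bold b}$ by the Leibniz rule turns the distribution of auxiliary points into the sum over $\bold e+\bold f=\bold b$ with weight $\binom{\bold b}{\bold e}$, together with the partition $I\amalg J=\{2,\dots,n\}$ of the original points, i.e.\ the last term. Finally, type (iv): the auxiliary point absorbed into the distinguished $\psi$-power is ``lost'' as a $\kappa$ and instead raises $d_1$; re-summing and comparing with the pure-$\psi$ expansion of the left side of the identity, these terms are precisely the contributions with $\bold L\ne 0$, the sign $(-1)^{||\bold L||}$ coming from the logarithm in the definition of the $p_j$, the multinomial $\binom{\bold b}{\bold L}$ from the Leibniz rule (the choice of which $\kappa$'s are absorbed), and the ratio $(2d_1+2|\bold L|+1)!!/(2|\bold L|+1)!!$ from the DVV joining coefficient with shifted exponent. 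Thus the whole identity reduces to a formal power-series statement in $\bold s$ and the Witten--Kontsevich times.

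The main obstacle is exactly this last coefficient bookkeeping: one must verify that, after the change of variables, the contributions in which several auxiliary insertions are simultaneously present telescope correctly against the corrections already encoded in the $p_j$, so that the coefficient attached to $\langle\kappa(\bold L')\tau_{d_1+|\bold L|}\cdots\rangle_g$ collapses precisely to $(-1)^{||\bold L||}\binom{\bold b}{\bold L}(2d_1+2|\bold L|+1)!!/(2|\bold L|+1)!!$ and no auxiliary-point terms survive on the right. The cleanest way to carry this out, I expect, is to prove the identity at the level of generating functions and match coefficients of $\bold s^{\bold b}$ --- equivalently, to conjugate the Virasoro operator $L_{d_1-1}$, which encodes the DVV recursion, by the shift $t_j\mapsto t_j+p_{j-1}(\bold s)$ and read off its action on the deformed tau function --- after which the genus and degree bookkeeping in the remaining terms is routine. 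This also explains why the constants $\beta_l$ of the Introduction are absent here: they arise only from the further specialization of $\bold s$ imposed by the hyperbolic geometry in Mirzakhani's setting, whereas the present statement is the underlying undeformed recursion.
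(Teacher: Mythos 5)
Your overall strategy is the one the paper itself uses: convert $\kappa(\bold b)$ into pure $\psi$-classes at auxiliary marked points (the Kaufmann--Manin--Zagier / Arbarello--Cornalba dictionary, Lemma 2.2 of the paper), apply the DVV recursion at the first point, and re-sum, with the terms in which the distinguished point is joined to an auxiliary point producing exactly the $\bold L\neq\bold 0$ part of the left-hand side. The paper carries this out as a direct term-by-term computation; your four types (i)--(iv) are precisely its four groups of terms.

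There is, however, a concrete error in your conversion dictionary, and it sits exactly at the step you yourself single out as the crux and leave unverified. The correct statement is that a single auxiliary insertion $\tau_{|\bold m|+1}$ carries weight $(-1)^{||\bold m||-1}\bold s^{\bold m}/\bold m!$, i.e.\ the shift polynomials satisfy $\sum_{k\ge 2}p_k(\bold s)\,z^{k-1}=1-\exp\bigl(-\sum_i s_i z^i\bigr)$, not $\log\bigl(1+\sum_i s_i z^i\bigr)$: applying the exponential formula to Arbarello--Cornalba's $\pi_*(\psi^{a_1+1}\cdots\psi^{a_p+1})=\sum_{\sigma\in\mathbb S_p}\prod_{c}\kappa_{\sum_{j\in c}a_j}$ gives $s(z)=-\log\bigl(1-p(z)\bigr)$, whose inverse is $p(z)=1-e^{-s(z)}$, which is not $\log(1+s(z))$. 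With your normalization each absorbed block $\bold L$ carries an extra factor $(||\bold L||-1)!$ (the two agree only for $||\bold L||\le 2$, so the discrepancy first appears when three or more $\kappa$-factors are converted together, e.g.\ already for $\kappa_1^3$); consequently the multi-insertion terms in your steps (i)--(iii) do not recombine to $\kappa(\bold b)$ with coefficient one, and the coefficient you must recover, $(-1)^{||\bold L||}\binom{\bold b}{\bold L}(2d_1+2|\bold L|+1)!!/(2|\bold L|+1)!!$, does not come out. Once the correct formula (the paper's Lemma 2.2) is substituted, your outline closes exactly as in the paper's proof, where indeed only the separating (quadratic) term needs care. One further caution on your proposed shortcut: the substitution $G(\bold s,\bold t)=F(t_0,t_1,t_2+p_2,\dots)$ is the paper's Theorem 4.4 and is there deduced from Theorems 1.1--1.2, so to avoid circularity you must first prove it directly from the conversion formula (as in Manin--Zograf) before conjugating the Virasoro operators and reading off the recursion.
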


\begin{theorem} Let
$\bold b\in N^\infty$ and $d_j\geq 0$. Then
\begin{multline}
(2d_1+1)!!\langle\kappa(\bold b)\tau_{d_1}\cdots\tau_{d_n}\rangle_g\\
=\sum_{j=2}^n\sum_{\bold L+\bold{L'}=\bold b}\alpha_{\bold
L}\binom{\bold b}{\bold L}\frac{(2(|\bold
L|+d_1+d_j)-1)!!}{(2d_j-1)!!}\langle\kappa(\bold{L'})\tau_{|\bold
L|+d_1+d_j-1}\prod_{i\neq
1,j}\tau_{d_i}\rangle_g\\
+\frac{1}{2}\sum_{\bold L+\bold{L'}=\bold b}\sum_{r+s=|\bold
L|+d_1-2}\alpha_{\bold
L}\binom{\bold b}{\bold L}(2r+1)!!(2s+1)!!\langle\kappa(\bold{L'})\tau_r\tau_s\prod_{i=2}^n\tau_{d_i}\rangle_{g-1}\\
+\frac{1}{2}\sum_{\substack{\bold L+\bold{e}+\bold{f}=\bold
b\\I\coprod J=\{2,\dots,n\}}}\sum_{r+s=|\bold
L|+d_1-2}\alpha_{\bold L}\binom{\bold b}{\bold
L,\bold{e},\bold{f}}(2r+1)!!(2s+1)!!\\
\times \langle\kappa(\bold{e})\tau_r\prod_{i\in
I}\tau_{d_i}\rangle_{g'}\langle\kappa(\bold{f})\tau_s\prod_{i\in
J}\tau_{d_i}\rangle_{g-g'},
\end{multline}
where the constants $\alpha_{\bold L}$ are determined recursively
from the following formula
$$\sum_{\bold L+\bold{L'}=\bold b}\frac{(-1)^{||\bold L||}\alpha_{\bold L}}{\bold L!\bold{L'}!(2|\bold{L'}|+1)!!}=0,\qquad \bold b\neq0,$$
namely
$$\alpha_{\bold b}=\bold b!\sum_{\substack{\bold L+\bold{L'}=\bold b\\ \bold{L'}\neq\bold 0}}\frac{(-1)^{||\bold L'||-1}\alpha_{\bold L}}{\bold L!\bold{L'}!(2|\bold{L'}|+1)!!},\qquad\bold b\neq0,$$
with the initial value $\alpha_{\bold 0}=1$.
\end{theorem}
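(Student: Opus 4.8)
The plan is to obtain the stated recursion from Theorem~1.1 by a triangular (M\"obius-type) inversion in the $\kappa$-variables. Note first that Theorem~1.1 already isolates $(2d_1+1)!!\,\langle\kappa(\bold b)\tau_{d_1}\cdots\tau_{d_n}\rangle_g$ as the $\bold L=\bold 0$ summand of its left-hand side, but accompanied by undesired terms $\langle\kappa(\bold{L'})\tau_{d_1+|\bold L|}\cdots\rangle_g$ with $\bold L\neq\bold 0$, which carry smaller monomials $\bold{L'}<\bold b$ but a shifted first $\tau$-index; the whole point is to cancel these. So write the identity of Theorem~1.1 as $L_{\bold b}^{(d_1)}=R_{\bold b}^{(d_1)}$ with
$$L_{\bold b}^{(d_1)}:=\sum_{\bold L+\bold{L'}=\bold b}(-1)^{||\bold L||}\binom{\bold b}{\bold L}\frac{(2d_1+2|\bold L|+1)!!}{(2|\bold L|+1)!!}\langle\kappa(\bold{L'})\tau_{d_1+|\bold L|}\prod_{j=2}^n\tau_{d_j}\rangle_g,$$
$R_{\bold b}^{(d_1)}$ being the whole right-hand side of Theorem~1.1 (with $d_2,\dots,d_n$ and $g$ fixed). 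For the given $\bold b$ I would form the finite linear combination
$$\sum_{\bold a\leq\bold b}\alpha_{\bold a}\binom{\bold b}{\bold a}\,L_{\bold b-\bold a}^{(d_1+|\bold a|)}=\sum_{\bold a\leq\bold b}\alpha_{\bold a}\binom{\bold b}{\bold a}\,R_{\bold b-\bold a}^{(d_1+|\bold a|)},$$
legitimate since Theorem~1.1 applies with parameters $(\bold b-\bold a,\,d_1+|\bold a|,\,d_2,\dots,d_n)$ for every $\bold a\leq\bold b$, a finite set. It then remains to show the left side telescopes to $(2d_1+1)!!\,\langle\kappa(\bold b)\tau_{d_1}\cdots\tau_{d_n}\rangle_g$ and the right side equals the three sums in the statement.

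For the left side, I would collect in $\sum_{\bold a}\alpha_{\bold a}\binom{\bold b}{\bold a}L_{\bold b-\bold a}^{(d_1+|\bold a|)}$ the terms carrying a fixed monomial $\kappa(\bold{L'})$; they come from triples $\bold a+\bold L+\bold{L'}=\bold b$, and each carries $\tau_{d_1+|\bold a|+|\bold L|}=\tau_{d_1+|\bold M|}$ at the first point, where $\bold M:=\bold b-\bold{L'}$. So the coefficient of $\langle\kappa(\bold{L'})\tau_{d_1+|\bold M|}\prod_{j\geq2}\tau_{d_j}\rangle_g$ is
$$(2(d_1+|\bold M|)+1)!!\sum_{\bold a+\bold L=\bold M}\alpha_{\bold a}\binom{\bold b}{\bold a}\binom{\bold b-\bold a}{\bold L}\frac{(-1)^{||\bold L||}}{(2|\bold L|+1)!!}.$$
Using $\binom{\bold b}{\bold a}\binom{\bold b-\bold a}{\bold L}=\bold b!/(\bold a!\,\bold L!\,\bold{L'}!)$ and $(-1)^{||\bold L||}=(-1)^{||\bold M||}(-1)^{||\bold a||}$ (from $||\bold a||+||\bold L||=||\bold M||$), this becomes $(2(d_1+|\bold M|)+1)!!\,(-1)^{||\bold M||}\,\frac{\bold b!}{\bold{L'}!}\sum_{\bold a+\bold L=\bold M}\frac{(-1)^{||\bold a||}\alpha_{\bold a}}{\bold a!\,\bold L!\,(2|\bold L|+1)!!}$; relabelling the summation variables $\bold a\to\bold L$, $\bold L\to\bold{L'}$ turns the remaining sum into $\sum_{\bold L+\bold{L'}=\bold M}\frac{(-1)^{||\bold L||}\alpha_{\bold L}}{\bold L!\,\bold{L'}!\,(2|\bold{L'}|+1)!!}$, which vanishes for $\bold M\neq\bold 0$ by the defining relation for $\alpha$ and equals $\alpha_{\bold 0}=1$ for $\bold M=\bold 0$. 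Hence the coefficient is $0$ unless $\bold{L'}=\bold b$, in which case it is $(2d_1+1)!!$, so the left side collapses to $(2d_1+1)!!\,\langle\kappa(\bold b)\tau_{d_1}\cdots\tau_{d_n}\rangle_g$. (I would also note the $\alpha$-recursion is well posed: isolating the $\bold{L'}=\bold 0$ term of the defining relation yields the stated closed formula for $\alpha_{\bold b}$ in terms of the $\alpha_{\bold L}$ with $\bold L<\bold b$, so $\alpha$ is determined by induction on $|\bold b|$ from $\alpha_{\bold 0}=1$, and the two forms of the recursion agree by the additivity of $||\cdot||$.)

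For the right side, I would expand $R_{\bold b-\bold a}^{(d_1+|\bold a|)}$ group by group under $\bold b\mapsto\bold b-\bold a$, $d_1\mapsto d_1+|\bold a|$ (the other $d_i$ unchanged). Its first group is $\sum_{j\geq2}\frac{(2(d_1+|\bold a|+d_j)-1)!!}{(2d_j-1)!!}\langle\kappa(\bold b-\bold a)\tau_{d_1+|\bold a|+d_j-1}\prod_{i\neq1,j}\tau_{d_i}\rangle_g$; weighting by $\alpha_{\bold a}\binom{\bold b}{\bold a}$, summing over $\bold a$, and relabelling $\bold L=\bold a$, $\bold{L'}=\bold b-\bold a$ reproduces the first sum of the statement. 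Its genus-lowering group is $\frac{1}{2}\sum_{r+s=|\bold a|+d_1-2}(2r+1)!!(2s+1)!!\langle\kappa(\bold b-\bold a)\tau_r\tau_s\prod_{i\geq2}\tau_{d_i}\rangle_{g-1}$, which the same weighting and relabelling turn into the second sum. In the splitting group Theorem~1.1 supplies an extra factor $\binom{\bold b-\bold a}{\bold e}$ with $\bold e+\bold f=\bold b-\bold a$; combining it via $\binom{\bold b}{\bold a}\binom{\bold b-\bold a}{\bold e}=\binom{\bold b}{\bold a,\bold e,\bold f}$ converts $\alpha_{\bold a}\binom{\bold b}{\bold a}\binom{\bold b-\bold a}{\bold e}$ into $\alpha_{\bold L}\binom{\bold b}{\bold L,\bold e,\bold f}$, which is exactly the third sum. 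Together with the left-side computation, this establishes the asserted identity.

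The one genuinely non-routine step is \emph{finding} the correct inversion, i.e. guessing that the weights should be $\alpha_{\bold a}\binom{\bold b}{\bold a}$ with the $\alpha_{\bold a}$ obeying the displayed recursion. These weights are forced by the sole requirement that $\sum_{\bold a\leq\bold b}\alpha_{\bold a}\binom{\bold b}{\bold a}L_{\bold b-\bold a}^{(d_1+|\bold a|)}$ telescope; equivalently, the $\alpha_{\bold a}$ form the multivariable inverse of the double-factorial prefactor $\frac{(2d_1+2|\bold L|+1)!!}{(2|\bold L|+1)!!}$ sitting on the left of Theorem~1.1, and the defining relation for $\alpha$ encodes precisely that inversion. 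Once the weights are in hand, the remainder is bookkeeping with multinomial coefficients and double factorials, plus the elementary additivity of $|\cdot|$ and $||\cdot||$.
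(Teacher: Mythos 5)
Your proof is correct and follows essentially the same route as the paper: the authors deduce Theorem~1.2 from Theorem~1.1 via a general inversion lemma (Lemma~2.1) applied to $F(\bold b,d_1)=\frac{(2d_1+1)!!}{\bold b!}\langle\prod\tau_{d_j}\kappa(\bold b)\rangle_g$ with the weights $\beta_{\bold L}=\frac{(-1)^{||\bold L||}}{\bold L!(2|\bold L|+1)!!}$ and their convolution inverse $\alpha_{\bold L}/\bold L!$. Your explicit telescoping of $\sum_{\bold a}\alpha_{\bold a}\binom{\bold b}{\bold a}L^{(d_1+|\bold a|)}_{\bold b-\bold a}$ is precisely the content of that lemma unpacked, and the bookkeeping on the right-hand side matches the paper's.
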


Denote $\alpha(l,0,0,\dots)$ by $\alpha_l$, we recover
Mirzakhani's recursion formula with
$$\alpha_l=l!\beta_l=(-1)^{l-1}(2^{2l}-2)\frac{B_{2l}}{(2l-1)!!}.$$
We also have
$$\alpha(\bm{\delta}_l)=\frac{1}{(2l+1)!!}.$$ Setting
$\bold b=\bold 0$, we get the Witten-Kontsevich theorem \cite{Wi,Ko}
in the form of DVV recursion relation \cite{DVV}.

Note that Theorems 1.1 and 1.2 hold only for $n\geq 1$. If $n=0$,
i.e. for higher Weil-Petersson volumes of $\overline{\mathcal M}_g$,
we may apply the following formula first (see Proposition 3.1).

\begin{equation}
\langle\kappa(\bold b)\rangle_g=\frac{1}{2g-2}\sum_{\bold L+\bold
L'=\bold b}(-1)^{||\bold L||}\binom{\bold b}{\bold
L}\langle\tau_{|\bold L|+1}\kappa(\bold L')\rangle_g.
\end{equation}

So we can use Theorems 1.1 and 1.2 to compute any intersection
numbers of $\psi$ and $\kappa$ classes recursively with the three
initial values
$$\langle\tau_0\kappa_1\rangle_1=\frac{1}{24},\qquad
\langle\tau_0^3\rangle_0=1,\qquad
\langle\tau_1\rangle_1=\frac{1}{24}.$$ We have computed a table of
$\alpha_{\bold L}$ for all $|\bold L|\leq 15$ and have written a
maple program \cite{Maple} implementing Theorems 1.1 and 1.2.

In the arguments of Mirzakhani, Mulase and Safnuk, they use
Wolpert's formula \cite{Wo}
$$\kappa_1=\frac{1}{2\pi^2}\omega_{WP},$$
where $\omega_{WP}$ is the Weil-Petersson K\"ahler form. Since
Wolpert's formula has no counterpart for higher degree $\kappa$
classes,  there is no a priori reason that Theorem 1.2 shall be
true.

We are led to Theorem 1.2 also by the discovery that $\psi$ and
$\kappa$ classes are compatible, namely recursions of pure $\psi$
classes can be neatly generalized to recursions including both
$\psi$ and $\kappa$ classes, where $\kappa_1$ plays no special role.
This fact is equivalent to a relation of generating functions in
Theorem 4.4.

For $\bold b\in N^\infty$, we denote by $V_{g,n}(\kappa(\bold b))$
the higher Weil-Petersson volume $$\langle\tau_0^n \kappa(\bold
b)\rangle_g=\int_{\overline{\sM}_{g,n}}\kappa(\bold b).$$ Let
$V_g(\kappa(\bold b))$ denote $V_{g,0}(\kappa(\bold b))$.

Higher Weil-Petersson volumes were extensively studied in the paper
\cite{KMZ}. The authors found an explicit expression (see Lemma 2.2
below) of $V_{g,n}(\kappa(\bold b))$ in terms of integrals of $\psi$
classes. In genus zero, they obtained more nice results about
generating functions of $V_{0,n}(\kappa(\bold b))$ and raised the
question whether their methods may be generalized to higher genera.

Although we feel it is difficult to generalize
Kaufmann-Manin-Zagier's results to higher genera, we did find an
effective recursion formula between $V_{g,n}(\kappa(\bold b))$ valid
for all $g$ and $n$, based on our previous work on integrals of
$\psi$ classes. The results are contained in the following two
theorems.

\begin{theorem}
Let $\bold b\in N^\infty$ and $n\geq 1$. Then
\begin{multline}
\big(2g-1+||\bold b||\big)V_{g,n}(\kappa(\bold
b))=\frac{1}{12}V_{g-1,n+3}(\kappa(\bold b))-\sum_{\substack{\bold L+\bold{L'}=\bold b\\
||\bold{L'}||\geq 2}}\binom{\bold b}{\bold L}V_{g,n}(\kappa(\bold
L)\kappa_{|\bold{L'}|})\\
+\frac{1}{2}\sum_{\substack{\bold L+\bold{L'}=\bold b\\
\bold{L}\neq\bold 0, \bold{L'}\neq\bold
0}}\sum_{r+s=n-1}\binom{\bold b}{\bold
L}\binom{n-1}{r}V_{g',r+2}(\kappa(\bold L))V_{g-g',s+2}(\kappa(\bold
{L'})).
\end{multline}
\end{theorem}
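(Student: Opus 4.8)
The plan is to specialize Theorem 1.2 to the case of higher Weil-Petersson volumes, where all $\psi$-class exponents vanish, and then repackage the resulting identity.  Concretely, I would start from Theorem 1.2 with $n$ replaced by $n+1$ and $d_1=d_2=\cdots=d_{n+1}=0$, so that the left-hand side becomes $(2\cdot0+1)!!\,\langle\kappa(\bold b)\tau_0^{n+1}\rangle_g = V_{g,n+1}(\kappa(\bold b))$.  The point of using $n+1$ marked points rather than $n$ is that the recursion in Theorem 1.2 ``removes'' one marked point in each of its three terms, so the output is naturally expressed with $n$ (or fewer) points; this is the standard device for turning a $\psi$-recursion into a dilaton/volume recursion.

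The key computational step is to evaluate each of the three sums on the right of Theorem 1.2 under the substitution $d_j=0$.  In the first (``string''-type) sum indexed by $j$, the exponent $|\bold L|+d_1+d_j-1 = |\bold L|-1$ forces $|\bold L|\geq 1$, the factor $(2(|\bold L|+d_1+d_j)-1)!!/(2d_j-1)!!$ collapses (using $(-1)!!=1$) to $(2|\bold L|-1)!!$, and summing over the $n$ choices of $j$ produces a factor $n$; after combining $\kappa(\bold L')$ with the leftover $\tau_{|\bold L|-1}$ and using formula (1.4) (equivalently Proposition 3.1) to convert that $\tau_{|\bold L|-1}$ back into a $\kappa$-class, one should recognize the term $-\sum_{\bold L+\bold L'=\bold b,\,||\bold L'||\geq2}\binom{\bold b}{\bold L}V_{g,n}(\kappa(\bold L)\kappa_{|\bold L'|})$ together with part of the $(2g-1+||\bold b||)$ prefactor.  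In the second (nonseparating) sum, $r+s=|\bold L|+d_1-2=|\bold L|-2$, and the $\tau_r\tau_s$ with the remaining $\tau_0^{\,n}$ must again be absorbed via (1.4); the contribution with $\bold L=\bold 0$ gives exactly the $\frac1{12}V_{g-1,n+3}(\kappa(\bold b))$ term (note $(2r+1)!!(2s+1)!!$ with $r=s=0$ contributes, and $\tfrac12\cdot\binom{?}{?}$ bookkeeping yields $\tfrac1{12}$ after the $\psi$-to-$\kappa$ conversion produces the $\tfrac1{24}$-type factors), while $\bold L\neq\bold 0$ terms feed back into lower $\kappa$-monomials.  In the third (separating) sum one similarly sets $r+s=|\bold L|-2$ and distributes $\tau_0^n$ over $I\amalg J$, and the $\bold L=\bold 0$ part gives precisely $\tfrac12\sum_{\bold L+\bold L'=\bold b,\,\bold L\neq0,\bold L'\neq0}\sum_{r+s=n-1}\binom{\bold b}{\bold L}\binom{n-1}{r}V_{g',r+2}(\kappa(\bold L))V_{g-g',s+2}(\kappa(\bold L'))$ after re-indexing the split of the marked points by their cardinalities $r=|I|$, $s=|J|$.

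I expect the main obstacle to be \emph{organizing the $\psi$-to-$\kappa$ conversions and the $\alpha_{\bold L}$ coefficients so that all the ``junk'' terms cancel}.  Theorem 1.2 is stated with the mysterious constants $\alpha_{\bold L}$, and after the substitution $d_j=0$ the remaining $\tau_{\geq1}$ insertions must be rewritten via (1.4), which itself introduces an alternating sum $\sum(-1)^{||\bold L||}\binom{\bold b}{\bold L}$.  The recursion defining $\alpha_{\bold L}$, namely $\sum_{\bold L+\bold L'=\bold b}(-1)^{||\bold L||}\alpha_{\bold L}/(\bold L!\,\bold L'!\,(2|\bold L'|+1)!!)=0$ for $\bold b\neq0$, is exactly engineered so that the $\alpha$-weighted sums telescope against these alternating binomial sums; verifying this cancellation carefully is the delicate bookkeeping at the heart of the argument.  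The safer and cleaner route, which I would actually carry out, is to bypass $\alpha_{\bold L}$ entirely: apply Theorem 1.1 (rather than 1.2) with $d_j=0$ — there the coefficients are the transparent $(-1)^{||\bold L||}\binom{\bold b}{\bold L}(2d_1+2|\bold L|+1)!!/(2|\bold L|+1)!!$ — then use (1.4) to handle the leftover positive $\psi$-powers, and finally sum the left-hand side of Theorem 1.1 over $d_1=0$ while noting that the alternating structure already matches (1.4).  Tracking the numerical constant $\tfrac1{12}$ through the $n=0$ initial values $\langle\tau_0\kappa_1\rangle_1=\tfrac1{24}$, $\langle\tau_0^3\rangle_0=1$ provides a convenient consistency check on the whole computation.
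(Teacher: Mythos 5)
Your proposal misidentifies where the two structural terms of Theorem 1.3 come from, and this is a genuine gap rather than a matter of bookkeeping. If you set all $d_j=0$ in Theorem 1.2 (or 1.1), then in both the nonseparating and the separating sums the index range is $r+s=|\bold L|+d_1-2=|\bold L|-2$, so the $\bold L=\bold 0$ contribution is \emph{empty}; it cannot produce the $\frac{1}{12}V_{g-1,n+3}(\kappa(\bold b))$ term or the splitting term $\sum_{r+s=n-1}\binom{n-1}{r}V_{g',r+2}V_{g-g',s+2}$ as you claim. Indeed the paper itself observes (proof of Proposition 3.1) that Theorem 1.1 at $d_1=0$ degenerates to the generalized string equation, and at $d_1=1$ the quadratic terms vanish and one is left with a dilaton-type identity; neither carries the $\frac{1}{12}$ nor the distribution of the remaining $n-1$ points over the two branches. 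Those features are the signature of the first KdV equation $\langle\tau_0\tau_1 X\rangle_g=\frac1{12}\langle\tau_0^4X\rangle_{g-1}+\frac12\sum\langle\tau_0^2X_I\rangle_{g'}\langle\tau_0^2X_J\rangle_{g-g'}$, which is \emph{not} a single specialization of the DVV/Virasoro recursion but requires combining several of its instances. Note also that the $r,s$ in Theorem 1.3's quadratic term count how the $n-1$ points are split ($r=|I|$), whereas the $r,s$ in Theorems 1.1--1.2 are $\psi$-exponents; your sketch conflates the two roles.

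The paper's actual route is different. It first establishes Proposition 3.3, the $\kappa$-extension (obtained via Lemma 2.2, exactly as in the proof of Theorem 1.1) of the KdV-type recursion of \cite{LX2}:
$$\langle\tau_0\tau_1\prod_{j}\tau_{d_j}\kappa(\bold b)\rangle_g=\frac1{12}\langle\tau_0^4\prod_{j}\tau_{d_j}\kappa(\bold b)\rangle_{g-1}+\frac12\sum\binom{\bold b}{\bold L}\langle\tau_0^2\prod_{i\in I}\tau_{d_i}\kappa(\bold L)\rangle_{g'}\langle\tau_0^2\prod_{i\in J}\tau_{d_i}\kappa(\bold{L'})\rangle_{g-g'}.$$
It then sets $d_j=0$ and evaluates the left side not by Proposition 3.1 but by the pushforward
$\pi_{n+2*}\bigl(\psi_{n+2}\prod_{i\geq1}(\pi_{n+2}^*\kappa_i+\psi_{n+2}^i)^{b(i)}\bigr)=\sum_{\bold L+\bold{L'}=\bold b}\binom{\bold b}{\bold L}\kappa(\bold L)\kappa_{|\bold{L'}|}$
from Lemma 3.2: the $||\bold{L'}||\leq1$ terms assemble (using $\kappa_0=2g-2+(n+1)$) into $\bigl(2g-1+n+||\bold b||\bigr)V_{g,n+1}(\kappa(\bold b))$, the $||\bold{L'}||\geq2$ terms give the correction sum, and the unstable $g'=0$, $r=1$ pieces of the quadratic term on the right contribute $nV_{g,n+1}(\kappa(\bold b))$, cancelling the extra $n$. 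To repair your argument you would need to first prove or import Proposition 3.3; that is the missing key lemma, and it does not follow from a single substitution into Theorem 1.1 or 1.2.
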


Theorem 1.3 is an effective formula for computing higher
Weil-Petersson volumes recursively by induction on $g$ and $||\bold
b||$, with the initial values
$$V_{0,3}(1)=1 \quad \text{ and }\quad V_{0,n}(\kappa(\bm{\delta}_{n-3}))=1,\ n\geq 4,$$
where $\bm{\delta}_a$ denotes the sequence with $1$ at the $a$-th
place and zeros elsewhere.

\begin{theorem}
Let $g\geq2$ and $\bold b\in N^\infty$. Then
\begin{multline}
\big((2g-1)(2g-2)+(4g-3)||\bold b||+||\bold
b||^2\big)V_{g}(\kappa(\bold b))=5\sum_{\bold L+\bold{L'}=\bold
b}\binom{\bold b}{\bold L}V_{g,1}(\kappa(\bold
L)\kappa_{|\bold{L'}|+1})\\
-\frac{1}6{}\sum_{\bold L+\bold{L'}=\bold b}\binom{\bold b}{\bold
L}V_{g-1,3}(\kappa(\bold L)\kappa_{|\bold{L'}|})-\sum_{\bold
L+\bold{e}+\bold{f}=\bold b}\binom{\bold b}{\bold{
L,e,f}}V_{g',1}(\kappa_{|\bold L|}\kappa(\bold
e))V_{g-g',2}(\kappa(\bold
f))\\
-(2g-1+||\bold b||)\sum_{\substack{\bold L+\bold{L'}=\bold
b\\||\bold{L'}||\geq 2}}\binom{\bold b}{\bold L}V_{g}(\kappa(\bold
L)\kappa_{|\bold{L'}|})\\
-\sum_{\substack{\bold L+\bold{L'}=\bold b\\||\bold{L'}||\geq
2}}\binom{\bold b}{\bold L}\sum_{\bold e+\bold f=\bold
L+\bm{\delta}_{|\bold{L'}|}}\binom{\bold
L+\bm{\delta}_{|\bold{L'}|}}{\bold e} V_{g}(\kappa(\bold
e)\kappa_{|\bold f|}).
\end{multline}
\end{theorem}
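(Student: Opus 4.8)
The strategy is to derive the genus-$g$ (no-marked-point) formula by combining Theorem 1.4 with the reduction formula (4), applied twice. The idea is that $V_g(\kappa(\bold b)) = V_{g,0}(\kappa(\bold b))$ has no $\tau_0$ insertion to play with, but formula (4) lets us trade $V_g(\kappa(\bold b))$ for a combination of $V_{g,1}$'s; then Theorem 1.4 (with $n=1$) governs those $V_{g,1}$'s; and formula (4) again, or a variant, re-expresses everything back in terms of $V_g$'s of the various $\kappa$-monomials, together with lower-genus and boundary contributions. I would start by applying (4) to write $(2g-2)V_g(\kappa(\bold b)) = \sum_{\bold L+\bold L'=\bold b}(-1)^{\|\bold L\|}\binom{\bold b}{\bold L}V_{g,1}(\tau_{|\bold L|+1}\kappa(\bold L'))$, and similarly I would need the analogue of (4) that reduces $V_{g,1}$ (i.e. a one-point invariant) to $V_{g,2}$; such a dilaton/string-type reduction should follow from (4) applied on $\overline{\sM}_{g,1}$ or be already implicit in Proposition 3.1, so I would invoke whichever precise form is available in Section 3.

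Next I would feed Theorem 1.4 into the resulting expression. Write Theorem 1.4 in the schematic form $(2g-1+\|\bold b\|)V_{g,n}(\kappa(\bold b)) = \tfrac{1}{12}V_{g-1,n+3}(\kappa(\bold b)) - (\text{$\kappa$-raising sum}) + \tfrac12(\text{splitting sum})$. Taking $n=1$ and substituting the $V_{g,1}$-reductions of the previous paragraph produces a recursion whose left side, after collecting the powers of $(2g-1+\|\bold b\|)$ coming from the two applications, yields the quadratic coefficient $(2g-1)(2g-2)+(4g-3)\|\bold b\|+\|\bold b\|^2$ on the left (note $(2g-1+\|\bold b\|)(2g-2+\|\bold b\|) = (2g-1)(2g-2)+(4g-3)\|\bold b\|+\|\bold b\|^2$, which is exactly the claimed polynomial — this identity is the structural heart of the statement and confirms the double-reduction approach). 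The constant $5$, the $-\tfrac16$, and the various binomial weights $\binom{\bold b}{\bold L,\bold e,\bold f}$ then emerge from reorganizing the $\kappa$-raising sum $V_{g,1}(\kappa(\bold L)\kappa_{|\bold L'|+1})$ term (where the extra $+1$ in the index comes from the $|\bold L|+1$ produced by (4)), from the $\tfrac{1}{12}V_{g-1,n+3}$ term specialized to $n=0$ after reduction (giving $-\tfrac16 V_{g-1,3}(\kappa(\bold L)\kappa_{|\bold L'|})$ after the combinatorial factor $\tfrac{1}{2}$ from the disjoint splitting collapses with the $\tfrac{1}{12}\cdot$ something), and from the boundary splitting sum, which after reduction of each factor becomes the $V_{g',1}(\kappa_{|\bold L|}\kappa(\bold e))V_{g-g',2}(\kappa(\bold f))$ term.

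The last two lines of (5) — the terms with $\|\bold L'\|\geq 2$ — are the residual $\kappa$-raising contributions that cannot be absorbed into a single $V_{g,1}$ reduction: when formula (4) introduces $\kappa_{|\bold L'|}$ with $\|\bold L'\|\geq 2$ this represents genuinely higher-degree $\kappa$ data, and re-expanding $\kappa(\bold L)\kappa_{|\bold L'|}$ via (4) a second time on $\overline{\sM}_{g}$ (to remove the freshly created $\tau_0$) produces the nested double sum $\sum_{\bold e+\bold f=\bold L+\bm\delta_{|\bold L'|}}\binom{\bold L+\bm\delta_{|\bold L'|}}{\bold e}V_g(\kappa(\bold e)\kappa_{|\bold f|})$ together with the simpler $-(2g-1+\|\bold b\|)\sum\binom{\bold b}{\bold L}V_g(\kappa(\bold L)\kappa_{|\bold L'|})$ term. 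So the proof is essentially bookkeeping: apply (4), apply Theorem 1.4 with $n=1$, apply the $V_{g,1}$-to-$V_{g,2}$ reduction, and apply (4) once more to the leftover higher-$\kappa$ terms, then collect coefficients.

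The main obstacle I anticipate is the coefficient bookkeeping in step two: keeping the multinomial coefficients $\binom{\bold b}{\bold L}$, $\binom{\bold b}{\bold L,\bold e,\bold f}$, and $\binom{\bold L+\bm\delta_{|\bold L'|}}{\bold e}$ consistent through two nested applications of the sign-alternating identity (4) — in particular verifying that the $(-1)^{\|\bold L\|}$ signs from the two applications cancel to leave the all-positive (except where noted) coefficients on the right of (5), and that the spurious diagonal terms (where $\bold L'=\bold 0$ or $\bold L=\bold 0$) recombine correctly with the $\|\bold b\|$-linear terms produced on the left. I would handle this by first proving the scalar identity $(2g-1+\|\bold b\|)(2g-2+\|\bold b\|) = (2g-1)(2g-2)+(4g-3)\|\bold b\|+\|\bold b\|^2$ to fix the target, then matching term types (pure $V_g$, $V_{g,1}$ with one raised $\kappa$, $V_{g-1,3}$, boundary products, and nested double-$\kappa$) one at a time against the output of the substitution.
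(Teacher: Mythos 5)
Your plan has the right general flavor (two successive reductions, each contributing a linear factor, producing the quadratic coefficient), but it is missing the one ingredient the proof actually hinges on, and without it the stated right-hand side cannot be reached. The paper proves this theorem by specializing Proposition~3.4 --- the $\kappa$-decorated version of the recursion $\langle\tau_1\tau_r\cdots\rangle_g=(2r+3)\langle\tau_0\tau_{r+1}\cdots\rangle_g-\tfrac16\langle\tau_0^3\tau_r\cdots\rangle_{g-1}-\sum\langle\tau_0\tau_r\cdots\rangle_{g'}\langle\tau_0^2\cdots\rangle_{g-g'}$ from \cite{LX2} --- to $r=1$, $n=0$, and then pushing the two $\psi$ classes in $\langle\tau_1\tau_1\kappa(\bold b)\rangle_g$ down to $\overline{\sM}_g$ one point at a time via Lemma~3.2. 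The universal constants $5=2r+3|_{r=1}$ and $-\tfrac16$ on the right of (5) come directly from that recursion. Your proposed ingredients are only equation (3) (the dilaton-type reduction) and the $V_{g,n}$ recursion of Theorem~1.3, whose only constants are $\tfrac1{12}$ and $\tfrac12$ and whose output terms have the shape $V_{g-1,n+3}$ and $V_{g',r+2}V_{g-g',s+2}$; no reorganization of those produces a genus-independent $5$, a $-\tfrac16 V_{g-1,3}$ term, or the asymmetric product $V_{g',1}\cdot V_{g-g',2}$. At best your route would yield a different (possibly also valid) recursion for $V_g(\kappa(\bold b))$, not the stated one.

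There is also an inconsistency in your own accounting of the quadratic coefficient. You correctly observe that $(2g-1+\|\bold b\|)(2g-2+\|\bold b\|)$ equals the claimed polynomial, but the two reductions you propose contribute the factors $(2g-2)$ (from equation (3) at $n=0$) and $(2g-1+\|\bold b\|)$ (from Theorem~1.3), whose product $(2g-2)(2g-1+\|\bold b\|)$ is \emph{not} equal to $(2g-1)(2g-2)+(4g-3)\|\bold b\|+\|\bold b\|^2$. In the paper the second factor $(2g-2+\|\bold b\|)$ arises because each push-forward $\pi_{*}\bigl(\psi\prod_i(\pi^*\kappa_i+\psi^i)^{b(i)}\bigr)$ contributes $\kappa_0=2g-2+n$ from the $\bold L'=\bold 0$ term \emph{plus} $\|\bold b\|$ from the $\|\bold L'\|=1$ diagonal terms that reconstitute a $\kappa_i$ of $\bold b$; the alternating-sign dilaton identity (3) does not supply that extra $\|\bold b\|$. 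A further, smaller issue: the terms $\langle\tau_{|\bold L|+1}\kappa(\bold L')\rangle_g$ produced by (3) are mixed $\psi$--$\kappa$ integrals, not higher Weil--Petersson volumes, so Theorem~1.3 does not apply to them as you assert. To repair the proof you should state and use Proposition~3.4 (with $r=1$, $n=0$) together with the push-forward formula $\langle\tau_1\tau_1\kappa(\bold b)\rangle_g=\sum_{\bold L+\bold L'=\bold b}\binom{\bold b}{\bold L}\int_{\overline{\sM}_{g,1}}\psi_1\kappa(\bold L)\kappa_{|\bold L'|}$ and its iteration down to $\overline{\sM}_g$.
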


By induction on $||\bold b||$, Theorem 1.4 reduces the computation
of $V_{g}(\kappa(\bold b))$ to the cases of $V_{g,n}(\kappa(\bold
b))$ for $n\geq1$, which have been computed by Theorem 1.3.
Therefore Theorems 1.3 and 1.4 completely determine higher
Weil-Petersson volumes of moduli spaces of curves.

The virtue of the above recursions is that they do not involve
$\psi$ classes. So if one wants to compute only higher
Weil-Petersson volumes, the above recursions are more efficient both
in speed and memory use, especially when we use ``option remember''
in a maple program.

On the other hand, we know that intersection numbers of mixed $\psi$
and $\kappa$ classes can be expressed by intersection numbers of
pure $\kappa$ classes \cite{Ar-Co}.

In Section 2, we prove Theorems 1.1 and 1.2. In Section 3 we prove
Theorems 1.3 and 1.4. In Section 4, we prove that the generating
functions of intersection numbers involving general $\kappa$ and
$\psi$ classes satisfy Virasoro constraints and the KdV hierarchy.
In Section 5, we consider recursions of Hodge integrals with
$\lambda$ classes.\\

\noindent{\bf Acknowledgements.} We would like to thank Chiu-Chu
Melissa Liu for helpful discussions. We thank the referees for
helpful suggestions.

\vskip 30pt
\section{Proofs of Theorems 1.1 and 1.2}

The following elementary lemma is crucial to our proof.
\begin{lemma}
Let $F(\bold L,n)$ and $G(\bold L,n)$ be two functions defined on
$N^\infty\times\mathbb N$, where $\mathbb N=\{0,1,2,\dots\}$ is the
set of nonnegative integers. Let $\alpha_{\bold L}$ and
$\beta_{\bold L}$ be real numbers depending only on $\bold L\in
N^\infty$ that satisfy $\alpha_{\bold 0}\beta_{\bold 0}=1$ and
$$\sum_{\bold L+\bold{L'}=\bold b}\alpha_{\bold L}\beta_{\bold{L'}}=0,\qquad\bold b\neq0.$$
Then the following two identities are equivalent.

\begin{align*}
G(\bold b,n)=\sum_{\bold L+\bold{L'}=\bold b} \alpha_{\bold L} F(\bold{L'},n+|\bold L|),\quad\forall\ (\bold b,n)\in N^\infty\times\mathbb N\\
F(\bold b,n)=\sum_{\bold L+\bold{L'}=\bold b} \beta_{\bold L} G(\bold{L'},n+|\bold L|),\quad\forall\ (\bold b,n)\in N^\infty\times\mathbb N\\
\end{align*}
\end{lemma}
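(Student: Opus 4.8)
The plan is to prove the equivalence by direct substitution, exploiting the fact that the weight $|\cdot|$ is \emph{additive} on $N^\infty$ (being the linear functional $|\bold m|=\sum_{i\geq1}i\,m(i)$), so that the two grading shifts in the second argument compose correctly. Since the hypothesis $\alpha_{\bold 0}\beta_{\bold 0}=1$ and $\sum_{\bold L+\bold{L'}=\bold b}\alpha_{\bold L}\beta_{\bold{L'}}=0$ (for $\bold b\neq0$) is symmetric under interchanging $\alpha\leftrightarrow\beta$, it suffices to show that the first identity implies the second; the converse then follows verbatim after swapping the roles of $(\alpha,F)$ and $(\beta,G)$.

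So assume $G(\bold b,n)=\sum_{\bold L+\bold{L'}=\bold b}\alpha_{\bold L}F(\bold{L'},n+|\bold L|)$ for all $(\bold b,n)$. Fix $(\bold b,n)$. For this fixed $\bold b$ there are only finitely many decompositions in $N^\infty$, so all the sums below are finite and may be reorganized freely. Substituting the assumed expression for $G$ into the right-hand side of the target identity gives
$$\sum_{\bold L+\bold{L'}=\bold b}\beta_{\bold L}\,G(\bold{L'},n+|\bold L|)=\sum_{\bold L+\bold{L'}=\bold b}\beta_{\bold L}\sum_{\bold M+\bold{M'}=\bold{L'}}\alpha_{\bold M}\,F(\bold{M'},n+|\bold L|+|\bold M|).$$

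The key step is to collect terms according to the total consumed index $\bold P:=\bold L+\bold M$ together with the surviving inner index $\bold{M'}=\bold b-\bold P$. Because $|\bold L|+|\bold M|=|\bold P|$ by additivity of $|\cdot|$, the double sum becomes
$$\sum_{\bold P+\bold{M'}=\bold b}\Bigl(\sum_{\bold L+\bold M=\bold P}\alpha_{\bold M}\beta_{\bold L}\Bigr)F(\bold{M'},n+|\bold P|).$$
By the hypothesis the inner convolution $\sum_{\bold L+\bold M=\bold P}\alpha_{\bold M}\beta_{\bold L}$ equals $0$ unless $\bold P=\bold 0$, in which case it equals $\alpha_{\bold 0}\beta_{\bold 0}=1$; hence only the term $\bold P=\bold 0$, $\bold{M'}=\bold b$ survives and the whole expression collapses to $F(\bold b,n)$, which is exactly the second identity.

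Finally, applying the same computation with $\alpha$ and $\beta$ interchanged and $F$ and $G$ interchanged yields the reverse implication, completing the proof. I do not expect a genuine obstacle here: the only point needing care is the bookkeeping of the shifts $n\mapsto n+|\bold L|\mapsto n+|\bold L|+|\bold M|$ and the observation that this shift depends only on $\bold P=\bold L+\bold M$; everything else is a formal manipulation of finite convolutions over the semigroup $N^\infty$.
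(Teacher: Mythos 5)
Your proposal is correct and follows essentially the same route as the paper: substitute the first identity into the convolution defining the second, regroup by the combined index $\bold P=\bold L+\bold M$ using additivity of $|\cdot|$, and invoke the orthogonality relation $\sum_{\bold L+\bold M=\bold P}\alpha_{\bold M}\beta_{\bold L}=\delta_{\bold P,\bold 0}$ to collapse the sum to $F(\bold b,n)$. The paper likewise dispatches the converse by noting the argument is symmetric, exactly as you do.
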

\begin{proof}
Assume the first identity holds, we have
\begin{align*}
\sum_{\bold a=\bold 0}^{\bold b} \beta_{\bold a} G(\bold b-\bold a,n+|\bold a|)&=\sum_{\bold a=\bold 0}^{\bold b} \beta_{\bold a}
\sum_{\bold{a'}=\bold 0}^{\bold b-\bold a}\alpha_{\bold{a'}} F(\bold b-\bold a-\bold{a'},n+|\bold a+\bold{a'}|)\\
&=\sum_{\bold L=\bold 0}^{\bold b}\sum_{\bold a+\bold{a'}=\bold L}(\beta_{\bold a}\alpha_{\bold{a'}}) F(\bold b-\bold L,n+|\bold L|)\\
&=\sum_{\bold L=\bold 0}^{\bold b}\delta_{\bold L,\bold0}F(\bold b-\bold L,n+|\bold L|)\\
&=F(\bold b,n).
\end{align*}
So we have proved the second identity. The proof of the other
direction is the same.
\end{proof}

We also need the following combinatorial formula from \cite{KMZ}.
\begin{lemma}\cite{KMZ} Let $\bold m\in N^\infty$.
\begin{multline*}
\langle\prod_{j=1}^n\tau_{d_j}\kappa(\bold
m)\rangle_g=\sum_{k=0}^{||\bold m||}\frac{(-1)^{||\bold
m||-k}}{k!}\sum_{\substack {\bold
m=\bold{m_1}+\cdots+\bold{m_k}\\\bold{m_i}\neq\bold 0}}\binom{\bold
m}{\bold
{m_1,\dots,m_k}}\langle\prod_{j=1}^n\tau_{d_j}\prod_{j=1}^k\tau_{|\bold{m_j}|+1}\rangle_g\\
=\sum_{k\geq0}\sum_{\substack {\bold
m=a_1\bold{m_1}+\cdots+a_k\bold{m_k}\\\bold{m_i}\neq\bold {m_j},
i\neq j}}\frac{(-1)^{||\bold m||-\sum_{i=1}^k a_i}}{\prod_{i=1}^k
a_i!}\binom{\bold m}{ {\underbrace{\bold{
m_1,..,m_1}}_{a_1},\dots,\underbrace{\bold{m_k,..,m_k}}_{a_k}}}\langle\prod_{j=1}^n\tau_{d_j}\prod_{j=1}^k\tau_{|\bold{m_j}|+1}^{a_j}\rangle_g
\end{multline*}
where in the last term, these distinct $\{\bold{m_1,\dots,m_k}\}$
are unordered in the summation and $a_i$ are positive integers.
\end{lemma}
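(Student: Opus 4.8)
The plan is to derive Lemma 2.2 as the inversion of the Arbarello--Cornalba formula for the pushforward of a monomial in $\psi$-classes along a morphism forgetting several marked points, the inversion being carried out by a M\"obius computation over the partition lattice. The geometric input is standard: with the Arbarello--Cornalba normalisation used here one has $\kappa_a=\pi_*(\psi_{n+1}^{a+1})$ for $\pi\colon\overline{\sM}_{g,n+1}\to\overline{\sM}_{g,n}$, together with $\pi^*\psi_j=\psi_j-D_j$, $\pi^*\kappa_a=\kappa_a-\psi_{n+1}^{a}$, and the vanishing $\psi_{n+1}\cdot D_j=0$. Iterating these along the morphism $\pi_k\colon\overline{\sM}_{g,n+k}\to\overline{\sM}_{g,n}$ forgetting the last $k$ points, erasing one point at a time and using that positive powers of the new $\psi$-classes annihilate the exceptional divisors, one gets the Arbarello--Cornalba identity (see \cite{Ar-Co})
\[
\pi_{k*}\Bigl(\prod_{j=1}^{k}\psi_{n+j}^{a_j+1}\Bigr)=\sum_{\sigma\in S_k}\ \prod_{c}\kappa_{\sum_{j\in c}a_j},
\]
the inner product running over the cycles $c$ of $\sigma$. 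I fix $\bold m\in N^\infty$, set $k=\|\bold m\|$, and take $(a_1,\dots,a_k)$ to be the sequence in which the value $i$ occurs $m(i)$ times; then $\kappa(\bold m)$ is precisely the term of the above sum attached to the identity permutation, all the others being products of $\kappa$'s obtained by merging some of the $a_j$.

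Next I would invert this on the partition lattice $\Pi_k$. For $P\in\Pi_k$ set $h(P)=\prod_{S\in P}\kappa_{\sum_{j\in S}a_j}$, so that $h(\hat 0)=\kappa(\bold m)$ at the finest partition, and $G(P)=\pi_{|P|*}\bigl(\prod_{S\in P}\psi^{(\sum_{j\in S}a_j)+1}\bigr)$, so that $G(\hat 0)$ is the left-hand side of the display. Applying the display to each $G(P)$ gives $G(P)=\sum_{R\geq P}\bigl(\prod_{T\in R}(\ell_T-1)!\bigr)h(R)$, where $\ell_T$ is the number of blocks of $P$ contained in the block $T$ of $R$; this is a triangular system with $1$'s on the diagonal. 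Looking for the inverse in multiplicative form, $\kappa(\bold m)=h(\hat 0)=\sum_{P\in\Pi_k}\bigl(\prod_{S\in P}x_{|S|}\bigr)G(P)$, the unknowns must satisfy $x_1=1$ and $\sum_{\rho\in\Pi_m}(|\rho|-1)!\prod_{S\in\rho}x_{|S|}=0$ for all $m\geq2$; by the exponential formula for cyclic assemblies of blocks this says $-\log\bigl(1-\sum_{i\geq1}x_iz^i/i!\bigr)=z$, equivalently $\sum_{i\geq1}x_iz^i/i!=1-e^{-z}$, whence $x_i=(-1)^{i-1}$ and $\prod_{S\in P}x_{|S|}=(-1)^{k-|P|}$. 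So the cycle factorials cancel, and capping with $\prod_j\psi_j^{d_j}$ and using the projection formula (the comparison of the $\psi_j$, $j\le n$, under $\pi_k$ contributing nothing, again by the vanishing against the new $\psi$'s) yields
\[
\big\langle\textstyle\prod_{j}\tau_{d_j}\,\kappa(\bold m)\big\rangle_g=\sum_{P\in\Pi_k}(-1)^{\,k-|P|}\,\big\langle\textstyle\prod_{j}\tau_{d_j}\prod_{S\in P}\tau_{(\sum_{j\in S}a_j)+1}\big\rangle_g.
\]

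Finally I would rewrite the right-hand side in the notation of the statement. Encoding each block $S$ by its content $\bold m_S\in N^\infty$ (so $\|\bold m_S\|=|S|$, $|\bold m_S|=\sum_{j\in S}a_j$ and $\sum_S\bold m_S=\bold m$), the number of ordered set partitions of the ground set into blocks with prescribed contents $(\bold m_1,\dots,\bold m_{k'})$ is $\prod_{i\geq1}\binom{m(i)}{m_1(i),\dots,m_{k'}(i)}=\binom{\bold m}{\bold m_1,\dots,\bold m_{k'}}$, and dividing by $k'!$ passes to unordered set partitions; this turns the last display into
\[
\sum_{k'\ge0}\frac{(-1)^{\|\bold m\|-k'}}{k'!}\sum_{\substack{\bold m=\bold m_1+\dots+\bold m_{k'}\\ \bold m_i\neq\bold 0}}\binom{\bold m}{\bold m_1,\dots,\bold m_{k'}}\big\langle\textstyle\prod_j\tau_{d_j}\prod_{i=1}^{k'}\tau_{|\bold m_i|+1}\big\rangle_g,
\]
which is the first identity of the Lemma; grouping the equal $\bold m_i$ together and dividing by the products of their multiplicities gives the second. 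The one genuinely delicate point is the combinatorial inversion: one must check that the cyclic-assembly generating function forces $x_i=(-1)^{i-1}$, so that no residual factorials survive in the coefficient $(-1)^{k-|P|}$ --- equivalently, the Stirling-number identity $\sum_j(-1)^{m-j}(j-1)!\,S(m,j)=\delta_{m,1}$. Everything geometric, and all bookkeeping after the inversion, is routine.
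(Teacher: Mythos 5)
Your proposal is correct and follows the same route as the paper's (sketched) proof: both start from the Arbarello--Cornalba pushforward formula $\pi_{p*}\bigl(\prod_j\psi_{n+j}^{a_j+1}\bigr)=\sum_{\sigma\in\mathbb S_p}\prod_c\kappa_{\sum_{j\in c}a_j}$ and then invert it combinatorially over set partitions to express $\kappa_{a_1}\cdots\kappa_{a_p}$ in terms of the $R(\cdot)$'s with coefficients $(-1)^{p-k}/k!$. Your M\"obius/exponential-formula computation showing the multiplicative inverse weights are $x_i=(-1)^{i-1}$ simply fills in the step the paper calls ``a formal combinatorial argument,'' and your final translation into the $\binom{\bold m}{\bold{m_1,\dots,m_k}}$ notation matches the paper's conclusion.
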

\begin{proof} We only give a sketch. Let
$ \pi_{n+p,n}:\overline{\sM}_{g,n+p}\longrightarrow
\overline{\sM}_{g,n} $ be the morphism which forgets the last $p$
marked points and denote
$\pi_{n+p,n*}(\psi_{n+1}^{a_1+1}\dots\psi_{n+p}^{a_p+1})$ by
$R(a_1,\dots,a_p)$, then we have the formula \cite{Ar-Co}
$$R(a_1,\dots,a_p)=\sum_{\sigma\in\mathbb S_p}\prod_{\substack{{\rm each\ cycle}\ c\\ {\rm of}\ \sigma}}\kappa_{\sum_{j\in c}a_j},$$
where we write any permutation $\sigma$ in the symmetric group
$\mathbb S_p$ as a product of disjoint cycles.

By a formal combinatorial argument, we get the following inversion
result
$$\kappa_{a_1}\cdots\kappa_{a_p}=\sum_{k=1}^p\frac{(-1)^{p-k}}{k!}\sum_{\substack{\{1,\dots,p\}=S_1\coprod\dots\coprod S_k\\S_k\neq\emptyset}}
R(\sum_{j\in S_1}a_j,\dots,\sum_{j\in S_k}a_j),$$ from which Lemma
2.2 follows.
\end{proof}

\bigskip
\noindent{\bf Proof of Theorem 1.1} \smallskip

Let LHS and RHS denote the left and right hand side of Theorem 1.1
respectively. By Lemma 2.2 and the Witten-Kontsevich theorem, we get
\begin{multline*}
(2d_1+1)!!\langle\prod_{j=1}^n\tau_{d_j}\kappa(\bold b)\rangle_g\\
=(2d_1+1)!!\sum_{k=0}^{||\bold b||}\frac{(-1)^{||\bold
b||-k}}{k!}\sum_{\substack {\bold{m_1}+\cdots+\bold{m_k}=\bold
b\\\bold {m_i}\neq0}}\binom{
\bold b}{\bold{m_1,\dots,m_k}}\langle\prod_{j=1}^n\tau_{d_j}\prod_{j=1}^k\tau_{|\bold{m_j}|+1}\rangle_g\\
=\sum_{k=0}^{||\bold b||} \frac{(-1)^{||\bold
b||-k}}{k!}\sum_{\substack {\bold{m_1}+\cdots+\bold{m_k}=\bold
b\\\bold{m_i}\neq0}}\binom{\bold
b}{\bold{m_1},\dots,\bold{m_k}}\qquad\qquad\qquad\qquad\qquad\qquad\qquad\\
\times\left(\sum_{j=2}^n\frac{(2(d_1+d_j)-1)!!}{(2d_j-1)!!}\langle\tau_{d_1+d_j-1}\prod_{i\neq
1,j}\tau_{d_i}\prod_{i=1}^k\tau_{|\bold{m_i}|+1}\rangle_g\right.\\
+\sum_{j=1}^k\frac{(2(d_1+|\bold{m_j}|)+1)!!}{(2|\bold{m_j}|+1)!!}\langle\tau_{d_1+|\bold{m_j}|}\prod_{i=2}^n\tau_{d_i}\prod_{i\neq
j}\tau_{|\bold {m_i}|+1}\rangle_g\\
+\frac{1}{2}\sum_{r+s=d_1-2}(2r+1)!!(2s+1)!!\langle\tau_r\tau_s\prod_{i=2}^n\tau_{d_i}\prod_{i=1}^k\tau_{|\bold{m_i}|+1}\rangle_{g-1}\\
\left.+\frac{1}{2}\sum_{\substack{I\coprod
J=\{2,\dots,n\}\\I'\coprod J'=\{1, \dots, k\}
}}\sum_{r+s=d_1-2}(2r+1)!!(2s+1)!!\right.\\
\left.\times\langle\tau_r\prod_{i\in I}\tau_{d_i}\prod_{i\in
I'}\tau_{|\bold{m_i}|+1}\rangle_{g'}\langle\tau_s\prod_{i\in
J}\tau_{d_i}\prod_{i\in
J'}\tau_{|\bold{m_i}|+1}\rangle_{g-g'}\right)\\
=\sum_{j=2}^n\frac{(2(d_1+d_j)-1)!!}{(2d_j-1)!!}\langle\kappa(\bold{b})\tau_{d_1+d_j-1}\prod_{i\neq
1,j}\tau_{d_i}\rangle_g\qquad\qquad\qquad\qquad\qquad\qquad\qquad\\
+\frac{1}{2}\sum_{r+s=|d_1|-2}(2r+1)!!(2s+1)!!\langle\kappa(\bold{b})\tau_r\tau_s\prod_{i\neq1}\tau_{d_i}\rangle_{g-1}\\
+\frac{1}{2}\sum_{\substack{\bold{e}+\bold{f}=\bold b\\I\coprod
J=\{2,\dots,n\}}}\sum_{r+s=d_1-2}\binom{\bold b}{\bold{e}}(2r+1)!!(2s+1)!!\\
\times \langle\kappa(\bold{e})\tau_r\prod_{i\in
I}\tau_{d_i}\rangle_{g'}\langle\kappa(\bold{f})\tau_s\prod_{i\in
J}\tau_{d_i}\rangle_{g-g'}\\
+\sum_{k=0}^{||\bold b||} \frac{(-1)^{||\bold
b||-k}}{k!}\sum_{\substack {\bold{m_1}+\cdots+\bold{m_k}=\bold
b\\\bold{m_i}\neq0}}\binom{\bold
b}{\bold{m_1},\dots,\bold{m_k}}\\
\times\sum_{j=1}^k\frac{(2(d_1+|\bold{m_j}|)+1)!!}{(2|\bold{m_j}|+1)!!}\langle\tau_{d_1+|\bold{m_j}|}\prod_{i=2}^n\tau_{d_i}\prod_{i\neq
j}\tau_{|\bold {m_i}|+1}\rangle_g\\
=RHS+\sum_{k\geq0}\frac{(-1)^{||\bold
b||-k-1}}{(k+1)!}\sum_{\substack{\bold L+\bold{L'} =\bold b\\\bold
L\neq\bold 0}}\sum_{\substack {\bold{m_1}+\cdots+\bold{m_k}=\bold
b-\bold L\\\bold{m_i}\neq0}}\binom{\bold b}{\bold L}\binom{\bold
b}{\bold{m_1},\dots,\bold{m_k}}\\
\times(k+1)\frac{(2(d_1+|\bold{L}|)+1)!!}{(2|\bold{L}|+1)!!}\langle\tau_{d_1+|\bold{L}|}\prod_{i=2}^n\tau_{d_i}
\prod_{i=1}^k\tau_{|\bold {m_i}|+1}\rangle_g\\
=RHS-\sum_{\substack{\bold L+\bold{L'}=\bold b\\\bold L\neq \bold
0}}(-1)^{||\bold L||}\binom{\bold b}{\bold L}\frac{(2d_1+2|\bold
L|+1)!!}{(2|\bold L|+1)!!} \langle\kappa(\bold L')\tau_{d_1+|\bold
L|}\prod_{j=2}^n\tau_{d_j}\rangle_g\\
=RHS-LHS+(2d_1+1)!!\langle\prod_{j=1}^n\tau_{d_j}\kappa(\bold
b)\rangle_g.
\end{multline*}

In the third equation, only the quadratic term needs a careful
verification. So we have proved $RHS=LHS$.

\bigskip
We will see that Theorem 1.2 follows from Theorem 1.1 and Lemma 2.1.
\bigskip

\noindent{\bf Proof of Theorem 1.2} \smallskip

Let $$F(\bold b,d_1)=\frac{(2d_1+1)!!}{\bold
b!}\langle\prod_{j=1}^n\tau_{d_j}\kappa(\bold b)\rangle_g$$ and
\begin{multline*}
G(\bold b,d_1)=\sum_{j=2}^n\frac{(2(|\bold L|+d_1+d_j)-1)!!}{\bold
b!(2d_j-1)!!}\langle\kappa(\bold{b})\tau_{d_1+d_j-1}\prod_{i\neq
1,j}\tau_{d_i}\rangle_g\\
+\frac{1}{2}\sum_{r+s=d_1-2}\frac{(2r+1)!!(2s+1)!!}{\bold b!}\langle\kappa(\bold{b})\tau_r\tau_s\prod_{i=2}^n\tau_{d_i}\rangle_{g-1}\\
+\frac{1}{2}\sum_{\substack{\bold{e}+\bold{f}=\bold b\\I\coprod
J=\{2,\dots,n\}}}\sum_{r+s=d_1-2}\frac{(2r+1)!!(2s+1)!!}{\bold e!\bold f!}\\
\times \langle\kappa(\bold{e})\tau_r\prod_{i\in
I}\tau_{d_i}\rangle_{g'}\langle\kappa(\bold{f})\tau_s\prod_{i\in
J}\tau_{d_i}\rangle_{g-g'}.
\end{multline*}

Note that Theorem 1.1 is just
$$\sum_{\bold L+\bold{L'}=\bold b} \frac{(-1)^{||\bold L||}}{\bold L!(2|\bold{L}|+1)!!} F(\bold{L'},d_1+|\bold L|)=G(\bold b,d_1).$$

By Lemma 2.1, we have
$$F(\bold b,d_1)=\sum_{\bold L+\bold{L'}=\bold b}\frac{\alpha_{\bold L}}{\bold{L}!}G(\bold{L'},d_1+|\bold L|),$$
which is just the result we want.

\vskip 30pt
\section{Higher Weil-Petersson volumes}
By applying Lemma 2.2 as in the proof of Theorem 1.1, we may
generalize recursions of pure $\psi$ classes to recursions including
both $\psi$ and $\kappa$ classes.

First we have the following generalization of the string and
dilation equations.
\begin{proposition}
For $\bold b\in N^\infty$, $n\geq 0$ and $d_j\geq0$,
$$\sum_{\bold L+\bold L'=\bold
b}(-1)^{||\bold L||}\binom{\bold b}{\bold L}\langle\tau_{|\bold
L|}\prod_{j=1}^n\tau_{d_j}\kappa(\bold
L')\rangle_g=\sum_{j=1}^n\langle\tau_{d_j-1}\prod_{i\neq
j}\tau_{d_i}\kappa(\bold b)\rangle_g,$$ and
$$\sum_{\bold L+\bold L'=\bold b}(-1)^{||\bold
L||}\binom{\bold b}{\bold L}\langle\tau_{|\bold
L|+1}\prod_{j=1}^n\tau_{d_j}\kappa(\bold
L')\rangle_g=(2g-2+n)\langle\prod_{j=1}^n\tau_{d_j}\kappa(\bold
b)\rangle_g.$$
\end{proposition}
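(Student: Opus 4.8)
The plan is to derive both identities from the corresponding well-known relations for pure $\psi$-class intersection numbers --- the string equation $\langle\tau_0\prod_j\tau_{d_j}\rangle_g=\sum_j\langle\tau_{d_j-1}\prod_{i\neq j}\tau_{d_i}\rangle_g$ and the dilaton equation $\langle\tau_1\prod_j\tau_{d_j}\rangle_g=(2g-2+n)\langle\prod_j\tau_{d_j}\rangle_g$ --- by inserting the expansion of $\kappa(\bold b)$ into pure $\psi$-classes provided by Lemma 2.2, exactly in the style of the proof of Theorem 1.1. The key point to exploit is that the $\kappa$-to-$\psi$ conversion of Lemma 2.2 commutes with forgetting (or adding) a marked point carrying a $\tau_0$ or $\tau_1$, up to a controlled discrepancy.

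Concretely, for the string equation I would write $\kappa(\bold b)=\sum_k\frac{(-1)^{\|\bold b\|-k}}{k!}\sum_{\bold m_1+\dots+\bold m_k=\bold b,\ \bold m_i\neq 0}\binom{\bold b}{\bold m_1,\dots,\bold m_k}\prod_{j=1}^k\tau_{|\bold m_j|+1}$ and substitute into $\langle\tau_0\prod_{j=1}^n\tau_{d_j}\kappa(\bold L')\rangle_g$ for each $\bold L'\le\bold b$. After applying the pure string equation to the resulting $\psi$-only brackets, the term where $\tau_0$ ``lowers'' one of the $\tau_{|\bold m_i|+1}$ coming from $\kappa(\bold L')$ produces $\tau_{|\bold m_i|}$; isolating this marked point as a piece $\bold L\le\bold L'$ with $|\bold L|=|\bold m_i|$ and relabelling $\bold L'\mapsto \bold L'-\bold L$ is precisely what makes the alternating sum $\sum_{\bold L+\bold L'=\bold b}(-1)^{\|\bold L\|}\binom{\bold b}{\bold L}(\cdots)$ telescope, leaving only the contributions where $\tau_0$ acts on one of the original $\tau_{d_j}$. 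That surviving contribution reassembles, via Lemma 2.2 again, into $\sum_j\langle\tau_{d_j-1}\prod_{i\neq j}\tau_{d_i}\kappa(\bold b)\rangle_g$. A subtlety: when $\bold m_i=\delta_1$ so that $|\bold m_i|+1=2$, lowering gives $\tau_1$, which is still fine; and when some $\tau_{|\bold m_i|+1}$ has index forced to be lowered to $\tau_{-1}$ --- impossible since indices are $\ge 2$ --- so no boundary issues arise there. The dilaton case is identical in structure but uses the pure dilaton equation; the extra bookkeeping is that the dilaton factor $(2g-2+n+k)$ on the pure side carries a ``$+k$'' from the $k$ extra points, and the terms where $\tau_1$ ``hits'' a $\kappa$-derived point contribute exactly the correction that converts $(2g-2+n+k)$ into $(2g-2+n)$ after the alternating sum, again by the same telescoping/relabelling.

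I expect the main obstacle to be the careful combinatorial verification that the alternating binomial sum over $\bold L+\bold L'=\bold b$ telescopes correctly, i.e. tracking which $\bold m_i$ gets absorbed into $\bold L$, handling the multinomial coefficients $\binom{\bold b}{\bold m_1,\dots,\bold m_k}$ versus $\binom{\bold b}{\bold L}\binom{\bold b-\bold L}{\bold m_1,\dots,\bold m_{k-1}}$, and the factor $\frac{1}{k!}$ versus $\frac{1}{(k-1)!}$ bookkeeping when one $\bold m_i$ is singled out --- essentially the same manipulation that appears in the last three displayed lines of the proof of Theorem 1.1 (the "$(k+1)$" cancellation against $1/(k+1)!$). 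Once that identity-of-the-form ``$\sum_{\bold L}(-1)^{\|\bold L\|}\binom{\bold b}{\bold L}[\text{shifted }\kappa\text{-bracket}] = [\text{unshifted }\kappa\text{-bracket after string/dilaton}]$'' is checked, both statements follow. As a sanity check I would verify the $\bold b=\bold 0$ case recovers the classical string and dilaton equations, and test the first nontrivial case $\bold b=\delta_1$ against known values such as $\langle\tau_0\kappa_1\rangle_1=\tfrac1{24}$ and $\langle\tau_1\rangle_1=\tfrac1{24}$.
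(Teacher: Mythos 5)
Your proposal is correct and takes essentially the same route as the paper: the dilaton identity is proved there exactly as you describe (expand via Lemma 2.2, apply the pure dilaton equation to get $2g-2+n+k$, and use the $k/k!=1/(k-1)!$ singling-out of one $\bold m_i$ so that the correction recombines, via Lemma 2.2 again, into minus the $\bold L\neq\bold 0$ terms of the alternating sum), while your string-equation computation is precisely the $d_1=0$ specialization of Theorem 1.1, which is how the paper disposes of the first identity in one line. The only cosmetic difference is that you rederive that special case directly from the string equation instead of citing Theorem 1.1, and note that only the $\bold L=\bold 0$ term actually needs expanding --- the remaining terms of the alternating sum arise as the correction terms rather than being expanded themselves.
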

\begin{proof}
The first identity follows by taking $d_1=0$ in Theorem 1.1. For the
second identity, we have
\begin{multline*}
\langle\prod_{j=1}^n\tau_{d_j}\tau_1\kappa(\bold b)\rangle_g\\
=\sum_{k\geq0}\sum_{\substack{\bold{m_1}+\cdots+\bold{m_k}=\bold
b\\\bold{m_i}\neq 0}}\frac{(-1)^{||\bold b||-k}}{k!}\binom{\bold
b}{\bold{m_1\dots,m_k}}
\langle\tau_1\prod_{j=1}^n\tau_{d_j}\prod_{j=1}^k\tau_{|\bold
m_j|+1}\rangle_g\\
=(2g+n-2)\langle\prod_{j=1}^n\tau_{d_j}\kappa(\bold b)\rangle_g\qquad\qquad\qquad\qquad\qquad\qquad\qquad\qquad\\
+\sum_{k\geq0}\sum_{\substack{\bold
L+\bold{m_1}+\cdots+\bold{m_k}=\bold b\\\bold L\neq0,\bold{m_i}\neq
0}}\frac{(-1)^{||\bold b||-k-1}}{k!}\binom{\bold
b}{\bold{L,m_1\dots,m_k}} \langle\tau_{|\bold
L|+1}\prod_{j=1}^k\tau_{|\bold
m_j|+1}\prod_{j=1}^n\tau_{d_j}\rangle_g.
\end{multline*}
Subtracting the last term from each side, we have proved the second
identity.
\end{proof}

For the particular case $\bold b=(m,0,0,\dots)$, Proposition 3.1 has
been proved by Norman Do and Norbury \cite{DN} in their study of the
intermediary moduli spaces consisting of hyperbolic surfaces with a
cone point of a specified angle.

We need the following results from \cite{Ar-Co}.

\begin{lemma} Let $\pi_{n+1}: \overline{\sM}_{g,n+1}\longrightarrow
\overline{\sM}_{g,n}$ be the morphism that forgets the last marked
point.
\begin{enumerate}
\item[i)] $\pi_{n*}(\psi_1^{a_1}\cdots\psi_{n-1}^{a_{n-1}}\psi_n^{a_{n}+1})=
\psi_1^{a_1}\cdots\psi_{n-1}^{a_{n-1}}\kappa_{a_n}$\qquad
\text{for}\quad $a_j\geq0$;

\item[ii)] $\kappa_a=\pi^*_{n+1}(\kappa_a)+\psi^a_{n+1}$\qquad
\text{on}\quad $\overline{\sM}_{g,n+1}$;

\item[iii)] $\kappa_0=2g-2+n$\qquad
\text{on}\quad $\overline{\sM}_{g,n}$.
\end{enumerate}
\end{lemma}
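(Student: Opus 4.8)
The plan is to recall that all three statements are standard consequences of the geometry of the forgetful morphism $\pi_{n+1}\colon\overline{\sM}_{g,n+1}\to\overline{\sM}_{g,n}$ together with the definition of the tautological classes given in the introduction, and to indicate the one-line argument for each. For part (iii), I would simply observe that $\kappa_0=\pi_*(c_1(\omega_\pi(\sum D_i)))=\pi_*(c_1(\omega_\pi)+\sum D_i)$; since $\pi_*(c_1(\omega_\pi))=12\lambda_1-\delta$ is a multiple-free computation we instead use the cleaner fact that the relative dualizing sheaf of a family of $n$-pointed stable curves has relative degree $2g-2+n$ once we twist by the marked points, so pushing forward the first Chern class of a line bundle of relative degree $2g-2+n$ along a morphism whose fibers are curves yields $(2g-2+n)$ times the fundamental class; hence $\kappa_0=2g-2+n$.

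For part (ii), the key point is the comparison of cotangent lines under pullback. On $\overline{\sM}_{g,n+1}$ one has the well-known relation $\omega_{\pi_{n+2}}=\pi_{n+2}^*(\omega_{\pi_{n+1}})\otimes\mathcal O(D_{n+1,n+2})$ at the level of relative dualizing sheaves, and more to the point the class $\bar\kappa_a:=\pi_*(c_1(\omega_\pi(\sum D_i))^{a+1})$ satisfies a pullback formula because the divisor $\sum D_i$ on the $(n+1)$-pointed universal curve differs from the pullback of the corresponding divisor on the $n$-pointed universal curve exactly by the section corresponding to the new point. Tracking this discrepancy through the pushforward and using the self-intersection formula $D_i^2=-\psi_i\cdot D_i$ collapses the extra terms to a single $\psi_{n+1}^a$ contribution, giving $\kappa_a=\pi_{n+1}^*(\kappa_a)+\psi_{n+1}^a$ on $\overline{\sM}_{g,n+1}$.

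For part (i), I would argue by the projection formula together with (ii). Write $\pi=\pi_{n+1}$. On $\overline{\sM}_{g,n+1}$ we have $\psi_j=\pi^*\psi_j+D_j$ for $j\le n-1$ — wait, more carefully, for the points not being forgotten one has $\psi_j=\pi^*(\psi_j)+D_{j}$ where $D_j$ is the boundary divisor where points $j$ and $n$ (the forgotten one, here indexed $n$) collide; but $D_j$ is disjoint from the other sections, so $\psi_j^{a_j}=\pi^*(\psi_j^{a_j})$ modulo classes supported on $D_j$ which die after multiplying by the remaining $\psi$'s and pushing forward except through the single point. After these reductions, $\pi_*(\psi_1^{a_1}\cdots\psi_{n-1}^{a_{n-1}}\psi_n^{a_n+1})=\psi_1^{a_1}\cdots\psi_{n-1}^{a_{n-1}}\cdot\pi_*(\psi_n^{a_n+1})$, and the last factor is by definition $\kappa_{a_n}$ once one checks that on the universal curve the class $\psi_n$ restricted to the fiber agrees with $c_1(\omega_\pi(\sum D_i))$ up to the sections — which is exactly the content of Mumford/Arbarello–Cornalba's definition of $\kappa_{a_n}$.

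The main obstacle is bookkeeping rather than conceptual: one must be careful that the divisors $D_i$ appearing in the twisted dualizing sheaf $\omega_\pi(\sum D_i)$ are handled consistently when passing between $\overline{\sM}_{g,n}$ and $\overline{\sM}_{g,n+1}$, since a naive pushforward of $c_1(\omega_\pi)^{a+1}$ (the older definition) does \emph{not} satisfy the clean pullback formula in (ii) — it is precisely Arbarello–Cornalba's twist by $\sum D_i$ that makes (ii) hold on the nose. Since all of this is proved in \cite{Ar-Co}, in the paper I would state Lemma 3.3 with a reference and only sketch the above, reserving space for the applications in the proofs of Theorems 1.3 and 1.4.
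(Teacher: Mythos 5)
The paper does not prove this lemma at all: it is stated as a quoted result from Arbarello--Cornalba \cite{Ar-Co}, so your closing plan (cite the reference, at most sketch the arguments) is exactly what the paper does, and your sketches are the standard proofs of these standard facts. Two small corrections to the sketch itself. First, in (iii) the aside about $\pi_*(c_1(\omega_\pi))=12\lambda_1-\delta$ is wrong --- that is the Grothendieck--Riemann--Roch formula for $\pi_*(c_1(\omega_\pi)^2)$; the pushforward of $c_1(\omega_\pi)$ itself is just $(2g-2)$ times the fundamental class, and with the twist by $\sum D_i$ one gets $2g-2+n$, which is the argument you correctly settle on. Second, in (i) the reason the correction terms supported on $D_{jn}$ (the divisor where the $j$-th and the forgotten $n$-th points collide) disappear is not disjointness of sections but the vanishing $\psi_n\cdot D_{jn}=0$ (there is at least one factor of $\psi_n$ since $a_n+1\ge 1$), together with $\psi_j\cdot D_{jn}=0$; after replacing each $\psi_j^{a_j}$ by $(\pi_n^*\psi_j)^{a_j}$ the projection formula reduces everything to $\pi_{n*}(\psi_n^{a_n+1})=\kappa_{a_n}$, which holds because on the universal curve $\psi_n=c_1(\omega_{\pi_n}(\sum_{i<n}D_{in}))$ --- this is precisely where the Arbarello--Cornalba twist enters, as you note at the end. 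With those repairs the proposal is a correct (if terse) account of the cited facts.
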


We have the following generalization of a recursion formula from the
Witten-Kontsevich theorem corresponding to the first equation in the
KdV hierarchy (see Theorem 1.2 of \cite{LX2}).
\begin{proposition}
Let $\bold b\in N^\infty$ and $n\geq 0$. Then
\begin{multline}
\langle\tau_0\tau_1\prod_{j=1}^n\tau_{d_j}\kappa(\bold{
b})\rangle_g=\frac{1}{12}\langle\tau_0^4\prod_{
j=1}^n\tau_{d_j}\kappa(\bold b)\rangle_{g-1}\\
+\frac{1}{2}\sum_{\substack{\bold L+\bold{L'}=\bold
b\\\underline{n}=I\coprod J}}\binom{\bold b}{\bold
L}\langle\tau_0^2\prod_{i\in I}\tau_{d_i}\kappa(\bold
L)\rangle_{g'}\langle\tau_0^2\prod_{i\in
J}\tau_{d_i}\kappa(\bold{L'})\rangle_{g-g'}.
\end{multline}
\end{proposition}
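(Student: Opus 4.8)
The plan is to reduce Proposition 3.3 to a known pure-$\psi$-class recursion via the mechanism already used in the proof of Theorem 1.1 and in Proposition 3.1. Specifically, I will start from the first equation of the KdV hierarchy in the form of Theorem 1.2 of \cite{LX2}, which is the pure $\psi$ statement
\begin{equation*}
\langle\tau_0\tau_1\prod_{j}\tau_{d_j}\rangle_g=\frac{1}{12}\langle\tau_0^4\prod_{j}\tau_{d_j}\rangle_{g-1}+\frac12\sum_{\underline n=I\coprod J}\langle\tau_0^2\prod_{i\in I}\tau_{d_i}\rangle_{g'}\langle\tau_0^2\prod_{i\in J}\tau_{d_i}\rangle_{g-g'},
\end{equation*}
and apply Lemma 2.2 to express each $\kappa$-monomial insertion as a signed sum of pure $\psi$-integrals with extra marked points carrying classes $\tau_{|\bold m_i|+1}$.

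The key steps, in order: first, expand $\langle\tau_0\tau_1\prod_j\tau_{d_j}\kappa(\bold b)\rangle_g$ by Lemma 2.2 into $\sum_k \frac{(-1)^{||\bold b||-k}}{k!}\sum_{\bold m_1+\cdots+\bold m_k=\bold b,\ \bold m_i\neq 0}\binom{\bold b}{\bold m_1,\dots,\bold m_k}\langle\tau_0\tau_1\prod_j\tau_{d_j}\prod_i\tau_{|\bold m_i|+1}\rangle_g$. Second, apply the pure-$\psi$ recursion above to each inner bracket, treating the $\tau_{|\bold m_i|+1}$ as part of the stable marked points; this produces three groups of terms (the genus-lowering $\frac1{12}\tau_0^4$ term, the splitting term, and — crucially — terms where one of the extra insertions $\tau_{|\bold m_j|+1}$ is ``hit,'' i.e. appears on the same leg as a $\tau_0^2$ factor being split off or gets reassembled). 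Third, re-collapse the $\psi$-sums back into $\kappa$-monomials using Lemma 2.2 in the reverse direction: the $\frac1{12}$ term reassembles to $\frac1{12}\langle\tau_0^4\prod_j\tau_{d_j}\kappa(\bold b)\rangle_{g-1}$, and the splitting terms, after regrouping the multinomial coefficients via $\binom{\bold b}{\bold L}\binom{\bold b-\bold L}{\bold m_1,\dots}$ type identities and summing over how the $\bold m_i$ distribute between the two factors, reassemble to the stated $\frac12\sum_{\bold L+\bold L'=\bold b,\ \underline n=I\coprod J}\binom{\bold b}{\bold L}\langle\tau_0^2\prod_{i\in I}\tau_{d_i}\kappa(\bold L)\rangle_{g'}\langle\tau_0^2\prod_{i\in J}\tau_{d_i}\kappa(\bold L')\rangle_{g-g'}$. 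Fourth, show that the leftover ``hit'' terms telescope against a self-referential appearance of the left-hand side — exactly as in the last few lines of the proof of Theorem 1.1, where the non-matching term was identified with $-LHS + (2d_1+1)!!\langle\cdots\rangle$ — so that moving it to the other side yields the identity.

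The main obstacle is bookkeeping the combinatorial regrouping in the quadratic (splitting) term: one must verify that the signs $(-1)^{||\bold b||-k}$, the factorials $1/k!$, and the multinomial coefficients conspire so that summing over all ways of partitioning the $k$ extra points and the underlying $\bold m_i$ between the two genus-$g'$ and genus-$(g-g')$ factors exactly produces $\binom{\bold b}{\bold L}$ times the product of two $\kappa$-reassembled brackets with a clean $\frac12$. This is the step flagged as ``only the quadratic term needs a careful verification'' in the proof of Theorem 1.1, and the same care is needed here; concretely it amounts to the convolution identity $\sum_{\bold L+\bold L'=\bold b}\binom{\bold b}{\bold L}\big(\text{Lemma 2.2 sum for }\bold L\big)\big(\text{Lemma 2.2 sum for }\bold L'\big)=\big(\text{Lemma 2.2 sum for }\bold b\big)$ applied factor-by-factor, which follows formally from the generating-function interpretation of Lemma 2.2. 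The genus-lowering term and the self-cancellation of the ``hit'' terms are then routine by comparison with Proposition 3.1 and Theorem 1.1.
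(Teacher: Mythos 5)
Your approach is the paper's: the paper gives no separate proof of Proposition 3.3, saying only that one applies Lemma 2.2 to the pure-$\psi$ recursion of Theorem 1.2 of \cite{LX2} exactly as in the proof of Theorem 1.1, and your steps 1--3 carry this out correctly, including the convolution identity $\binom{\bold b}{\bold{m_1,\dots,m_k}}=\binom{\bold b}{\bold L}\binom{\bold L}{\dots}\binom{\bold{L'}}{\dots}$ together with $\frac{1}{k!}\binom{k}{k_1}=\frac{1}{k_1!k_2!}$ that reassembles the quadratic term. One correction to your step 4: the KdV-form recursion you start from has no linear term that alters any insertion other than the distinguished $\tau_0\tau_1$ (unlike the DVV recursion used for Theorem 1.1, whose term $\sum_j\frac{(2(d_1+d_j)-1)!!}{(2d_j-1)!!}\langle\tau_{d_1+d_j-1}\cdots\rangle_g$ is what ``hits'' the extra points $\tau_{|\bold{m_j}|+1}$ and produces the alternating sum on the left of Theorem 1.1). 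Here each $\tau_{|\bold{m_i}|+1}$ is merely carried into the genus-$(g-1)$ bracket or distributed between the two factors of the splitting term, so your third group of ``hit'' terms is empty and there is nothing to telescope --- which is precisely why the left-hand side of Proposition 3.3 is a single bracket rather than an alternating sum over $\bold L+\bold{L'}=\bold b$.
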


Now we give a proof of Theorem 1.3. Let LHS and RHS denote the left
and right hand side of Proposition 3.3 respectively. Taking $d_j=0$
and applying Lemma 3.2, we have
\begin{multline*}
LHS=\int_{\overline{\sM}_{g,n+1}}\pi_{n+2*}\left(\psi_{n+2}\prod_{i\geq
1}(\pi_{n+2}^*\kappa_i+\psi_{n+2}^i)^{b(i)}\right) \\=\sum_{\bold
L+\bold{L'}=\bold b}\binom{\bold b}{\bold
L}\langle\tau_0^{n+1}\kappa(\bold L)\kappa_{|\bold{L'}|}\rangle_g\\
=\big((2g-1+n)+||\bold b||\big)\langle\tau_0^{n+1}\kappa(\bold
b)\rangle_g+\sum_{\substack{\bold L+\bold{L'}=\bold
b\\||\bold{L'}||\geq2}}\binom{\bold b}{\bold
L}\langle\tau_0^{n+1}\kappa(\bold L)\kappa_{|\bold{L'}|}\rangle_g
\end{multline*} and
\begin{multline*}
RHS=\frac{1}{12}\langle\tau_0^{n+4}\kappa(\bold b)\rangle_{g-1}
+\frac{1}{2}\sum_{\bold L+\bold{L'}=\bold b}\sum_{r+s=n}\binom{\bold
b}{\bold L}\binom{n}{r}\langle\tau_0^{r+2}\kappa(\bold
L)\rangle_{g'}\langle\tau_0^{s+2}\kappa(\bold{L'})\rangle_{g-g'}\\
=\frac{1}{12}\langle\tau_0^{n+4}\kappa(\bold b)\rangle_{g-1}+\frac{1}{2}\sum_{\substack{\bold L+\bold{L'}=\bold b\\
\bold{L}\neq\bold 0, \bold{L'}\neq\bold 0}}\sum_{r+s=n}\binom{\bold
b}{\bold L}\binom{n}{r}\langle\tau_0^{r+2}\kappa(\bold
L)\rangle_{g'}\langle\tau_0^{s+2}\kappa(\bold{L'})\rangle_{g-g'}\\
+n\langle\tau_0^{n+1}\kappa(\bold b)\rangle_g.
\end{multline*}
So Theorem 1.3 follows from $LHS=RHS$.

By further expanding the term $V_{g-1,n+3}(\kappa(\bold b))$ in
Theorem 1.3, we get
\begin{multline*}
V_{g,n}(\kappa(\bold b))=\delta_{||\bold
b||,0}+\frac{1}{24^gg!}\delta_{||\bold
b||,1}+\sum_{h=0}^g\frac{(2h-3+||\bold b||)!!}{12^{g-h}(2g-1+||\bold
b||)!!}\times\\ \left(
\frac{1}{2}\sum_{\substack{\bold L+\bold{L'}=\bold b\\
\bold{L}\neq\bold 0, \bold{L'}\neq\bold
0}}\sum_{r+s=n-1+3(g-h)}\binom{\bold b}{\bold
L}\binom{n-1+3(g-h)}{r}V_{h',r+2}(\kappa(\bold
L))V_{h-h',s+2}(\kappa(\bold {L'}))\right.\\\left.
-\sum_{\substack{\bold L+\bold{L'}=\bold b\\
||\bold{L'}||\geq 2}}\binom{\bold b}{\bold
L}V_{h,n+3(g-h)}(\kappa(\bold L)\kappa_{|\bold{L'}|})\right).
\end{multline*}

The following proposition is a generalization of a recursion formula
proved in Proposition 2.6 of \cite{LX2}.
\begin{proposition}
Let $\bold b\in N^\infty$, $n\geq 0$ and $r\geq0$. Then
\begin{multline}
\langle\tau_1\tau_r\prod_{j=1}^n\tau_{d_j}\kappa(\bold{
b})\rangle_g=(2r+3)\langle\tau_0\tau_{r+1}\prod_{j=1}^n\tau_{d_j}\kappa(\bold
b)\rangle_{g}-\frac{1}{6}\langle\tau_0^3\tau_r\prod_{j=1}^n\tau_{d_j}\kappa(\bold b)\rangle_{g-1}\\
-\sum_{\substack{\bold L+\bold{L'}=\bold b\\\underline{n}=I\coprod
J}}\binom{\bold b}{\bold L}\langle\tau_0\tau_r\prod_{i\in
I}\tau_{d_i}\kappa(\bold L)\rangle_{g'}\langle\tau_0^2\prod_{i\in
J}\tau_{d_i}\kappa(\bold{L'})\rangle_{g-g'}.
\end{multline}
\end{proposition}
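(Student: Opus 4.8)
The plan is to bootstrap Proposition 3.4 from its pure-$\psi$ special case, which is Proposition 2.6 of \cite{LX2} (the case $\bold b=\bold 0$), by the same ``expand with Lemma 2.2, recurse term by term, re-contract with Lemma 2.2'' scheme used in the proof of Theorem 1.1 and in the derivation of Theorem 1.3 from Proposition 3.3. First I would apply Lemma 2.2 to the left-hand side $\langle\tau_1\tau_r\prod_{j=1}^n\tau_{d_j}\kappa(\bold b)\rangle_g$, writing it as a signed sum, over decompositions $\bold b=\bold{m_1}+\cdots+\bold{m_k}$ with $\bold{m_i}\neq\bold 0$, of pure-$\psi$ correlators $\langle\tau_1\tau_r\prod_{j=1}^n\tau_{d_j}\prod_{i=1}^k\tau_{|\bold{m_i}|+1}\rangle_g$, weighted by $\frac{(-1)^{||\bold b||-k}}{k!}\binom{\bold b}{\bold{m_1,\dots,m_k}}$.

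Next I would apply the pure-$\psi$ recursion of Proposition 2.6 of \cite{LX2} to each such correlator, treating the insertions $\tau_{|\bold{m_i}|+1}$ together with $\tau_{d_2},\dots,\tau_{d_n}$ as spectator points. This yields three families of terms: a term $(2r+3)\langle\tau_0\tau_{r+1}\cdots\rangle_g$, a genus-reduction term $-\frac16\langle\tau_0^3\tau_r\cdots\rangle_{g-1}$, and disconnected terms $-\sum\langle\tau_0\tau_r(\cdots)_I\rangle_{g'}\langle\tau_0^2(\cdots)_J\rangle_{g-g'}$ in which the $\tau_{d_i}$ and the $\tau_{|\bold{m_i}|+1}$ are split between the two factors, while $\tau_r$ stays on the genus-$g'$ side, exactly as in \cite{LX2}. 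In the first two families the indices $|\bold{m_i}|+1$ are untouched, so summing over the partition data and the multinomial weights and invoking Lemma 2.2 once more reassembles them precisely into $(2r+3)\langle\tau_0\tau_{r+1}\prod_j\tau_{d_j}\kappa(\bold b)\rangle_g$ and $-\frac16\langle\tau_0^3\tau_r\prod_j\tau_{d_j}\kappa(\bold b)\rangle_{g-1}$.

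The step that needs care is the disconnected family, which is exactly the ``quadratic term'' in the proof of Theorem 1.1. One must check that summing over all ways of distributing the unordered collection $\{\bold{m_1},\dots,\bold{m_k}\}$ between the two correlator factors, with the signs and multinomials produced by Lemma 2.2, is the same as first splitting $\bold b=\bold L+\bold{L'}$ with weight $\binom{\bold b}{\bold L}$ and then running the Lemma 2.2 expansion separately on $\kappa(\bold L)$ and on $\kappa(\bold{L'})$. This is the assertion that the Lemma 2.2 substitution is multiplicative for the natural coproduct $\bold b\mapsto\bold L+\bold{L'}$ on $N^\infty$, i.e. a binomial/exponential-formula identity; it is the verification already carried out in Section 2, so I would quote it rather than repeat the bookkeeping. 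Reassembling the three pieces then gives the right-hand side of Proposition 3.4, and the identity follows from the corresponding pure-$\psi$ identity of \cite{LX2}. The main, and essentially the only, obstacle is this reshuffling of the disconnected contribution; granted Lemma 2.2 and Proposition 2.6 of \cite{LX2}, everything else is formal.
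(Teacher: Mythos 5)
Your proposal is correct and coincides with the paper's own (only sketched) argument: Section 3 derives Proposition 3.4 precisely by expanding $\kappa(\bold b)$ via Lemma 2.2, applying the pure-$\psi$ recursion of Proposition 2.6 of \cite{LX2} termwise with the $\kappa$-derived insertions $\tau_{|\bold{m_i}|+1}$ as spectators, and recontracting with Lemma 2.2, the only delicate point being the disconnected (``quadratic'') term, which is handled exactly as in the proof of Theorem 1.1. Since the spectator indices are untouched by that recursion, your reassembly of the first two families and your multiplicativity check for the split term reproduce the paper's reasoning.
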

Let LHS and RHS denote the left and right hand side of Proposition
3.4 respectively. Taking $r=1$ and $n=0$, we have
\begin{multline*}
LHS=\int_{\overline{\sM}_{g,1}}\pi_{2*}\left(\psi_1\psi_2\prod_{i\geq
1}(\pi_{2}^*\kappa_i+\psi_{2}^i)^{b(i)}\right)\\
=\sum_{\bold L+\bold{L'}=\bold b}\binom{\bold b}{\bold
L}\int_{\overline{\sM}_{g,1}}\psi_1\kappa(\bold
L)\kappa_{|\bold{L'}|}\\
=(||\bold b||+2g-1)\int_{\overline{\sM}_{g,1}}\psi_1\kappa(\bold
b)+\sum_{\substack{\bold L+\bold{L'}=\bold b\\||\bold{L'}||\geq
2}}\int_{\overline{\sM}_{g,1}}\kappa(\bold L)\kappa_{|\bold{L'}|}
\\=\big((2g-1)(2g-2)+(4g-3)||\bold b||+||\bold
b||^2\big)V_{g}(\kappa(\bold b))\\
+(2g-1+||\bold b||)\sum_{\substack{\bold L+\bold{L'}=\bold
b\\||\bold{L'}||\geq 2}}\binom{\bold b}{\bold L}V_{g}(\kappa(\bold
L)\kappa_{|\bold{L'}|})\\
+\sum_{\substack{\bold L+\bold{L'}=\bold b\\||\bold{L'}||\geq
2}}\binom{\bold b}{\bold L}\sum_{\bold e+\bold f=\bold
L+\bm{\delta}_{|\bold{L'}|}}\binom{\bold
L+\bm{\delta}_{|\bold{L'}|}}{\bold e} V_{g}(\kappa(\bold
e)\kappa_{|\bold f|}).
\end{multline*}
and similarly,
\begin{multline*}
RHS=5\sum_{\bold L+\bold{L'}=\bold b}\binom{\bold b}{\bold
L}V_{g,1}(\kappa(\bold L)\kappa_{|\bold{L'}|+1})
-\frac{1}6{}\sum_{\bold L+\bold{L'}=\bold b}\binom{\bold b}{\bold
L}V_{g-1,3}(\kappa(\bold L)\kappa_{|\bold{L'}|})\\-\sum_{\bold
L+\bold{e}+\bold{f}=\bold b}\binom{\bold b}{\bold{
L,e,f}}V_{g',1}(\kappa_{|\bold L|}\kappa(\bold
e))V_{g-g',2}(\kappa(\bold f)).
\end{multline*}
So we have proved Theorem 1.4.

\vskip 30pt
\section{Virasoro constraints and the KdV hierarchy}
In this section, we follow the arguments of Mulase and Safnuk
\cite{MS} to study properties of generating functions of
intersections of $\psi$ and $\kappa$ classes using Theorems 1.1 and
1.2.

Let $\bold s:=(s_1,s_2,\dots)$ and $\bold t:=(t_0,t_1,t_2,\dots)$,
we introduce the following generating function
$$G(\bold s,\bold t):=\sum_{g}\sum_{\bold m,\bold n}\langle\kappa_1^{m_1}\kappa_2^{m_2}\cdots\tau_0^{n_0}\tau_1^{n_1}\cdots\rangle_g\frac{\bold s^{\bold m}}{\bold m!}\prod_{i=0}^\infty\frac{t_i^{n_i}}{n_i!},$$
where $\bold s^{\bold m}=\prod_{i\geq1}s_i^{m_i}$.

Propositions 3.3 and 3.4 can be reformulated in terms of
differential operators.
\begin{proposition} Let $r\geq0$. Then we have
$$\frac{\partial^2{G}}{\partial t_0\partial t_1}=\frac{1}{12}\frac{\partial^4G}{\partial t_0^4}
+\frac{1}{2}\frac{\partial^2G}{\partial
t_0^2}\frac{\partial^2G}{\partial t_0^2}
$$ and
$$\frac{\partial^2G}{\partial t_1\partial t_r}=(2r+3)\frac{\partial^2G}{\partial t_0\partial t_{r+1}}
-\frac{1}{6}\frac{\partial^4G}{\partial t_0^3\partial
t_r}-\frac{\partial^2G}{\partial t_0\partial
t_r}\frac{\partial^2G}{\partial t_0^2}.
$$
\end{proposition}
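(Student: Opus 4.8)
The plan is to translate the two identities of Propositions 3.3 and 3.4 directly into statements about the generating function $G(\bold s,\bold t)$, exactly in the spirit of Mulase–Safnuk. The key observation is the dictionary: differentiating $G$ with respect to $t_{d}$ corresponds to inserting a $\tau_d$ into the correlators, while the coefficient extraction in the $\bold s$ variables keeps track of the $\kappa(\bold b)$ data. More precisely, for any fixed $g$, $\bold b$, and $n$-tuple $(d_1,\dots,d_n)$, the coefficient of $\frac{\bold s^{\bold b}}{\bold b!}\prod_i\frac{t_i^{n_i}}{n_i!}$ in a product like $\frac{\partial^2 G}{\partial t_0\partial t_1}$ is, by the Leibniz rule applied to the $\bold s$-expansion, precisely a sum over $\bold L+\bold L'=\bold b$ and $I\amalg J=\{1,\dots,n\}$ with the binomial weights $\binom{\bold b}{\bold L}$ built in automatically from $\frac{1}{\bold L!}\cdot\frac{1}{\bold{L'}!}=\frac{1}{\bold b!}\binom{\bold b}{\bold L}$. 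So the combinatorial bookkeeping in Propositions 3.3 and 3.4 is exactly what the product rule produces.

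First I would record the monomial-coefficient translation carefully: fixing all the $d_j$ and reading off the coefficient of the corresponding $\bold t$-monomial times $\frac{\bold s^{\bold b}}{\bold b!}$. The term $\frac{\partial^2 G}{\partial t_0\partial t_1}$ yields $\langle\tau_0\tau_1\prod\tau_{d_j}\kappa(\bold b)\rangle_g$; the term $\frac{1}{12}\frac{\partial^4 G}{\partial t_0^4}$ yields $\frac{1}{12}\langle\tau_0^4\prod\tau_{d_j}\kappa(\bold b)\rangle_{g-1}$ (the genus drops by one because the generating function is graded so that a single connected correlator of one higher genus sits in the same total-degree slot as this fourth derivative — this is the standard genus shift in these recursions and is forced by dimension counting, $3g-3+n$); and the quadratic term $\frac{1}{2}\left(\frac{\partial^2 G}{\partial t_0^2}\right)^2$ yields, after applying Leibniz in both $\bold s$ and $\bold t$, exactly $\frac{1}{2}\sum_{\bold L+\bold{L'}=\bold b,\ n=I\amalg J}\binom{\bold b}{\bold L}\langle\tau_0^2\prod_{i\in I}\tau_{d_i}\kappa(\bold L)\rangle_{g'}\langle\tau_0^2\prod_{i\in J}\tau_{d_i}\kappa(\bold{L'})\rangle_{g-g'}$, summed over $g'+ (g-g')=g$. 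This is precisely equation (3.3) of Proposition 3.3, so matching coefficients gives the first displayed PDE. The second PDE follows the same way from Proposition 3.4: $\frac{\partial^2 G}{\partial t_1\partial t_r}$ gives the left side, $(2r+3)\frac{\partial^2 G}{\partial t_0\partial t_{r+1}}$ and $-\frac16\frac{\partial^4 G}{\partial t_0^3\partial t_r}$ give the first two right-hand terms, and $-\frac{\partial^2 G}{\partial t_0\partial t_r}\frac{\partial^2 G}{\partial t_0^2}$ produces the convolution term (here there is no factor $\frac12$ because the two factors are asymmetric, $\tau_0\tau_r$ versus $\tau_0^2$, so each splitting is counted once — this matches the absence of $\frac12$ in (3.4)).

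The one point requiring genuine care — and the step I expect to be the main obstacle — is verifying that the quadratic (convolution) terms match exactly, including the handling of disconnected versus connected correlators and the summation over the genus splitting $g=g'+(g-g')$. In $G$ as defined, the correlators $\langle\cdots\rangle_g$ are connected, so a product $\frac{\partial^2 G}{\partial t_0^2}\cdot\frac{\partial^2 G}{\partial t_0^2}$ naturally produces the sum over ordered pairs of connected pieces with both the genus and the marked points (and the $\kappa$-exponent vector $\bold b$) distributed between them; one must check that the $\frac12$ in front correctly accounts for the ordered-versus-unordered overcounting and that boundary/unstable cases (e.g. $\langle\tau_0^2\kappa(\bold 0)\rangle_0$) are consistent with the conventions in Propositions 3.3 and 3.4. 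Once the convolution terms are seen to agree verbatim with (3.3) and (3.4), the proposition follows by comparing coefficients of every monomial $\frac{\bold s^{\bold b}}{\bold b!}\prod_i\frac{t_i^{n_i}}{n_i!}$, since two formal power series agree iff all their coefficients do. I would present the argument for the first PDE in full detail and note that the second is entirely analogous.
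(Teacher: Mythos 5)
Your proposal is correct and is essentially the paper's own (implicit) argument: the paper simply states that Proposition 4.1 is the reformulation of Propositions 3.3 and 3.4 in terms of differential operators on $G$, and your coefficient-matching via the Leibniz rule in the $\bold s$ and $\bold t$ variables, with the genus fixed by the dimension constraint $3g-3+n$, is exactly the verification that is being left to the reader. The points you flag for care (the $\tfrac12$ versus no $\tfrac12$ on the quadratic terms, and the ordered splittings of $\bold b$, the marked points, and the genus) are handled correctly.
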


We define $\beta_{\bold L}=\alpha_{\bold L}/\bold L!$ where
$\alpha_{\bold L}$ are the same constants in Theorem 1.2. We
introduce the following family of differential operators for $k\geq
-1$,
\begin{multline}\label{VKhat}
 \hat V_{k} = -\frac{(2k+3)!! }{2}\frac{\partial}{\partial t_{k+1}}
    + \delta_{k,-1}(\frac{t_0^2}{4} + \frac{s_1}{48}) + \frac{\delta_{k,0}}{16} \\
    + \frac{1}{2}\sum_{\bold L}\sum_{j=0}^{\infty} \frac{(2(|\bold L|+j+k)+1)!! }{(2j-1)!! }\beta_{\bold L}\bold s^{\bold L} t_j
    \frac{\partial}{\partial t_{|\bold L|+j+k}} \\
    + \frac{1}{4}\sum_{\bold L} \sum_{\substack{d_1+d_2=\\|\bold L|+k-1 }}(2d_1 +1)!!
        (2d_2+1)!! \beta_{\bold L}\bold s^{\bold L} \frac{\partial^2}{\partial t_{d_1} \partial t_{d_2}}.
\end{multline}

\begin{theorem}
We have $\hat V_k\exp(G)=0$ for $k\geq-1$ and
\begin{equation*}
[\hat V_n,\hat V_m]=(n-m)\sum_{\bold L}\beta_{\bold L}\bold s^{\bold
L}\hat V_{n+m+|\bold L|}.
\end{equation*}
\end{theorem}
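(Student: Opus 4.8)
The plan is to derive both assertions from the recursion formulae already established, exactly in the spirit of Mulase--Safnuk \cite{MS}. First I would prove $\hat V_k\exp(G)=0$ by showing that the coefficient of each monomial $\bold s^{\bold b}\prod_i t_i^{n_i}/(\bold b!\,\bold n!)$ in $\hat V_k\exp(G)$ vanishes. Applying $\hat V_k$ to $\exp(G)$ and dividing by $\exp(G)$ converts the second-order term $\partial^2/\partial t_{d_1}\partial t_{d_2}$ into $G_{t_{d_1}t_{d_2}}+G_{t_{d_1}}G_{t_{d_2}}$; extracting the coefficient of a fixed monomial then produces precisely the four groups of terms in Theorem 1.2 (with $k+1$ playing the role of the distinguished index $d_1$, shifted): the linear $\partial/\partial t_{k+1}$ term gives the left side $(2d_1+1)!!\langle\kappa(\bold b)\tau_{d_1}\cdots\rangle_g$, the first sum over $\bold L$ and $j$ reproduces the string-type (dilaton-weighted) sum $\sum_j \alpha_{\bold L}\binom{\bold b}{\bold L}\tfrac{(2(|\bold L|+d_1+d_j)-1)!!}{(2d_j-1)!!}$ term, and the quadratic term splits into the $g-1$ (connected) and $g'+g''$ (disconnected) contributions via $G_{t_{d_1}t_{d_2}}$ and $G_{t_{d_1}}G_{t_{d_2}}$ respectively. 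The low-order correction terms $\delta_{k,-1}(t_0^2/4+s_1/48)$ and $\delta_{k,0}/16$ account for the unstable/boundary contributions $\langle\tau_0^3\rangle_0$, $\langle\tau_1\rangle_1$, $\langle\tau_0\kappa_1\rangle_1$ that Theorem 1.2 does not see directly (it requires $n\geq 1$ and stable ranges), so I would check these two special cases $k=-1$ and $k=0$ by hand against the initial values listed after Theorem 1.4. The substitution $\beta_{\bold L}=\alpha_{\bold L}/\bold L!$ makes the multinomial $\binom{\bold b}{\bold L}$ and $\binom{\bold b}{\bold L,\bold e,\bold f}$ coefficients in Theorem 1.2 come out correctly after multiplying through by $\bold b!$.

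For the commutator relation, I would proceed purely formally at the level of differential operators, not using $G$ at all. Write $\hat V_k = A_k + B_k + C_k + (\text{scalars})$ where $A_k$ is the single derivative $-\tfrac{(2k+3)!!}{2}\partial_{t_{k+1}}$, $B_k$ is the "first order times first order" transport term $\tfrac12\sum_{\bold L,j}\cdots t_j\partial_{t_{|\bold L|+j+k}}$, and $C_k$ is the second order term $\tfrac14\sum_{\bold L}\cdots\partial_{t_{d_1}}\partial_{t_{d_2}}$. Then $[\hat V_n,\hat V_m]$ decomposes into $[A_n,B_m]+[B_n,A_m]$, $[B_n,B_m]$, $[A_n,C_m]+[C_n,A_m]$, $[B_n,C_m]+[C_n,B_m]$, plus contributions from the $\delta$-terms. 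The $[A,C]$ and $[A,B]$ brackets contribute single-derivative and second-order pieces; $[B,B]$ contributes transport pieces; $[B,C]$ contributes second-order pieces. The claim is that after collecting, everything reassembles as $(n-m)\sum_{\bold L}\beta_{\bold L}\bold s^{\bold L}\hat V_{n+m+|\bold L|}$. The key identity that drives this is a double-factorial addition formula of the type
$$
(2a-1)!!(2b+1)!! - (2a+1)!!(2b-1)!! = 2(a-b)\,(2a-1)!!(2b-1)!!,
$$
which, when summed against the convolution $\sum_{\bold L_1+\bold L_2=\bold L}\beta_{\bold L_1}\beta_{\bold L_2}$ using the defining recursion $\sum_{\bold L+\bold L'=\bold b}\alpha_{\bold L}\beta_{\bold{L'}}=0$ from Lemma 2.1 (equivalently $\sum \beta$'s convolve correctly), produces the factor $(n-m)\beta_{\bold L}$ on the right. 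I would organize the bookkeeping by the "output derivative index": each term on both sides that ends in $\partial_{t_N}$ or $\partial_{t_{d_1}}\partial_{t_{d_2}}$ with $N$ or $d_1+d_2$ fixed relative to $n+m+|\bold L|$ must match.

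The main obstacle is the bookkeeping in the commutator computation, specifically the $[B_n,C_m]+[C_n,B_m]$ and $[B_n,B_m]$ cross terms: one must carefully track how the free index $j$ in the transport term $B$ interacts with the two differentiation indices $d_1,d_2$ in $C$, how the $\bold s^{\bold L}$ weights convolve, and how the "$-1$" shift $d_1+d_2=|\bold L|+k-1$ versus the "$+k$" shift in $B$ conspire so that the surviving expressions have the right double-factorial normalization to be absorbed into $\hat V_{n+m+|\bold L|}$. I expect this to reduce, after the dust settles, to verifying a single scalar identity among double factorials (the displayed one above, or a close relative) together with the convolution identity for the $\beta_{\bold L}$; once those two are in hand the rest is forced. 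I would also need to separately check that the anomalous scalar terms $\delta_{k,-1}(t_0^2/4+s_1/48)$ and $\delta_{k,0}/16$ contribute correctly to $[\hat V_{-1},\hat V_m]$ and $[\hat V_0,\hat V_m]$, which is a finite check. This establishes the Virasoro bracket; combined with $\hat V_k\exp(G)=0$ it shows the $\hat V_k$ generate a Virasoro-type algebra annihilating the partition function, and the KdV statement then follows from the $s=0$ specialization, which is the classical Witten--Kontsevich case.
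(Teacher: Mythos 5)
Your plan is correct and is essentially the paper's own proof, which is extremely terse: the first assertion is declared to be ``a restatement of Theorem 1.2'' (exactly the coefficient-extraction you describe, with $d_1=k+1$ and the scalar terms $\delta_{k,-1}(t_0^2/4+s_1/48)$, $\delta_{k,0}/16$ accounting for the initial values), and the commutator is left as ``one may check directly.'' Two comments on your sketch of that direct check. The bookkeeping you are bracing for is short-circuited by the rewriting the paper introduces immediately after this theorem: in the variables $T_{2i+1}=t_i/(2i+1)!!$ one has $\hat V_k=-\tfrac12 J_{2k+3}+\sum_{\bold L}\beta_{\bold L}\bold s^{\bold L}E_{k+|\bold L|}$, where the $E_k$ are the standard free-boson Virasoro generators, so everything reduces to $[E_a,E_b]=(a-b)E_{a+b}$ and $[J_{2a+3},E_b]=\tfrac{2a+3}{2}J_{2(a+b)+3}$ (your double-factorial identity is the content of the latter, up to a sign: its left-hand side equals $2(b-a)(2a-1)!!(2b-1)!!$, not $2(a-b)(2a-1)!!(2b-1)!!$). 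More importantly, the inversion relation $\sum_{\bold L+\bold{L'}=\bold b}\gamma_{\bold L}\beta_{\bold{L'}}=0$ plays no role in the bracket: the quadratic-in-$\beta$ contribution is $\sum_{\bold{L_1},\bold{L_2}}\beta_{\bold{L_1}}\beta_{\bold{L_2}}\bold s^{\bold{L_1}+\bold{L_2}}\bigl(n+|\bold{L_1}|-m-|\bold{L_2}|\bigr)E_{n+m+|\bold{L_1}|+|\bold{L_2}|}$, the $|\bold{L_1}|-|\bold{L_2}|$ part cancels under the symmetrization $\bold{L_1}\leftrightarrow\bold{L_2}$, and the surviving double convolution is precisely what the right-hand side $\sum_{\bold L}\beta_{\bold L}\bold s^{\bold L}\hat V_{n+m+|\bold L|}$ produces upon expanding $\hat V$; so the commutation relation holds for arbitrary constants $\beta_{\bold L}$ and you should not expect (or need) a cancellation driven by the defining recursion for $\alpha_{\bold L}$. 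Neither point is a gap; your argument goes through once these are adjusted.
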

\begin{proof}
Note that the termination cases of the recursion formula in Theorem
1.2 are
$$\langle\tau_0\kappa_1\rangle_1=\frac{1}{24},\qquad
\langle\tau_0^3\rangle_0=1,\qquad
\langle\tau_1\rangle_1=\frac{1}{24}.$$ So $\hat V_k\exp(G)=0$ for
$k\geq-1$ is just a restatement of Theorem 1.2.

One may check directly that
$$[\hat V_n,\hat V_m]=(n-m)\sum_{\bold L}\beta_{\bold L}\bold s^{\bold L}\hat V_{n+m+|\bold L|}.$$
\end{proof}

The following constants are inverse to $\beta_{\bold L}$,
$$\gamma_{\bold L}:=\frac{(-1)^{||\bold L||}}{\bold L!(2|\bold L|+1)!!}.$$
Define a new family of differential operators $V_k$ for $k\geq -1$
by
 \begin{multline} \label{VK}
    V_k = -\frac{1}{2} \sum_{\bold L} (2(|\bold L|+k)+3)!! \gamma_{\bold L} \bold
    s^{\bold L}
        \frac{\partial }{\partial t_{|\bold L|+k+1} }
        + \frac{1}{2} \sum_{j=0}^{\infty} \frac{(2(j+k)+1)!! }{(2j-1)!! } t_j
        \frac{\partial }{\partial t_{j+k} } \\
    + \frac{1}{4} \sum_{d_1 + d_2 = k-1}
        (2d_1 + 1)!! (2d_2 + 1)!! \frac{\partial^2 }{\partial t_{d_1} \partial t_{d_2}}
        + \frac{\delta_{k,-1}t_0^2}{4} + \frac{\delta_{k,0} }{16},
 \end{multline}
Theorem 1.1 implies $V_k\exp(G)=0$. We now prove that the operators
$V_k$ satisfy the Virasoro relations
$$[V_n,V_m]=(n-m)V_{n+m}.$$

Introduce new variables
$$T_{2i+1}:=\frac{t_i}{(2i+1)!!},\quad i\geq0$$
which transform the operators
$\hat V_k$ into
\begin{align*}
    \hat V_{k} =& -\frac{1}{2}\frac{\partial }{\partial T_{2k+3} }
    + \delta_{k,-1} (\frac{t_0^2}{4} + \frac{s_1}{48}) + \frac{\delta_{k,0} }{16 } \\
    & + \frac{1}{2} \sum_{\bold L}\sum_{j=0}^{\infty} (2j+1) \beta_{\bold L} \bold s^{\bold L} T_{2j+1} \frac{\partial}{\partial   T_{2(|\bold L|+j+k) + 1}} \\
    & + \frac{1}{4} \sum_{\bold L} \sum_{\substack{d_1+d_2 = \\ |\bold L|+k-1 }} \beta_{\bold L} \bold s^{\bold L} \frac{\partial^2}{\partial T_{2d_1 +1} \partial T_{2d_2 +1} }.
\end{align*}
Define operators $J_p$ for $p\in\mathbb Z$ by
\begin{equation*}
    J_p = \begin{cases}
        (-p) T_{-p} & \text{if $p<0$}, \\
        \frac{\partial}{\partial T_p} & \text{if $p>0$}.
    \end{cases}
\end{equation*}
Then
\begin{align*}
    \hat V_k &= -\frac{1}{2} J_{2k+3} + \sum_{\bold L}
        \beta_{\bold L}\bold s^{\bold L} E_{k+|\bold L|}, \\
    \intertext{where}
    E_k &= \frac{1}{4}\sum_{p\in\mathbb Z} J_{2p+1} J_{2(k-p) - 1}
        + \frac{\delta_{k,0}}{16}.
\end{align*}

It's not difficult to see that
$$
V_k=\sum_{\bold L}\gamma_{\bold L}\bold s^{\bold L}\hat V_{k+|\bold
L|}\notag=-\frac{1}{2}\sum_{\bold L}\gamma_{\bold L}\bold s^{\bold
L}J_{2k+2|\bold L|+3}+E_k.
$$

\begin{theorem}
The operators $V_k$, $k\geq-1$ satisfy the Virasoro relations
$$[V_n,V_m]=(n-m)V_{n+m}.$$
\end{theorem}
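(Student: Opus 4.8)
The plan is to deduce the Virasoro relations $[V_n,V_m]=(n-m)V_{n+m}$ from the bracket relation
\[
[\hat V_n,\hat V_m]=(n-m)\sum_{\bold L}\beta_{\bold L}\bold s^{\bold L}\hat V_{n+m+|\bold L|}
\]
established in Theorem 4.2, together with the fact that the constants $\gamma_{\bold L}$ are the convolution inverse of $\beta_{\bold L}$, i.e.\ $\sum_{\bold L+\bold L'=\bold b}\beta_{\bold L}\gamma_{\bold L'}=\delta_{\bold b,\bold 0}$ (this is exactly the recursion defining $\alpha_{\bold L}$ in Theorem 1.2, rewritten via $\beta_{\bold L}=\alpha_{\bold L}/\bold L!$ and $\gamma_{\bold L}=(-1)^{\|\bold L\|}/(\bold L!(2|\bold L|+1)!!)$). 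The bridge is the identity $V_k=\sum_{\bold L}\gamma_{\bold L}\bold s^{\bold L}\hat V_{k+|\bold L|}$ recorded just above the statement.

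First I would substitute $V_n=\sum_{\bold L}\gamma_{\bold L}\bold s^{\bold L}\hat V_{n+|\bold L|}$ and $V_m=\sum_{\bold M}\gamma_{\bold M}\bold s^{\bold M}\hat V_{m+|\bold M|}$ into the bracket. Since the $\bold s$-variables are central (they commute with all the $\partial/\partial t_i$), they pull out of the commutator, giving
\[
[V_n,V_m]=\sum_{\bold L,\bold M}\gamma_{\bold L}\gamma_{\bold M}\bold s^{\bold L+\bold M}\,[\hat V_{n+|\bold L|},\hat V_{m+|\bold M|}].
\]
Next I would apply Theorem 4.2 to each inner bracket. Writing $a=n+|\bold L|$, $b=m+|\bold M|$, we get $[\hat V_a,\hat V_b]=(a-b)\sum_{\bold N}\beta_{\bold N}\bold s^{\bold N}\hat V_{a+b+|\bold N|}$, and $a-b=(n-m)+|\bold L|-|\bold M|$. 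The term $(n-m)$ will produce the desired right-hand side after resumming, while the $|\bold L|-|\bold M|$ piece must cancel by the antisymmetry of the double sum over $(\bold L,\bold M)$ — swapping $\bold L\leftrightarrow\bold M$ sends $|\bold L|-|\bold M|\mapsto -(|\bold L|-|\bold M|)$ while leaving $\gamma_{\bold L}\gamma_{\bold M}\bold s^{\bold L+\bold M}\beta_{\bold N}\bold s^{\bold N}\hat V_{n+m+|\bold L|+|\bold M|+|\bold N|}$ invariant, so that contribution vanishes identically.

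For the surviving $(n-m)$ part, I would collect, for fixed total weight, the coefficient of $\hat V_{n+m+k}$. Setting $|\bold L|+|\bold M|+|\bold N|=k$ and summing, the coefficient is
\[
(n-m)\sum_{\substack{\bold L+\bold M+\bold N=\bold c\\|\bold L|+|\bold M|+|\bold N|=k}}\gamma_{\bold L}\gamma_{\bold M}\beta_{\bold N}\,\bold s^{\bold c},
\]
and the key step is to show this telescopes to $(n-m)\gamma_{\bold c}\bold s^{\bold c}$. Indeed, grouping $\bold M+\bold N=\bold d$ first and using $\sum_{\bold M+\bold N=\bold d}\beta_{\bold N}\gamma_{\bold M}=\delta_{\bold d,\bold 0}$ collapses the sum to $\sum_{\bold L=\bold c}\gamma_{\bold L}\bold s^{\bold c}=\gamma_{\bold c}\bold s^{\bold c}$; resumming over $\bold c$ and $k$ then gives $(n-m)\sum_{\bold L}\gamma_{\bold L}\bold s^{\bold L}\hat V_{n+m+|\bold L|}=(n-m)V_{n+m}$, as required. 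The only genuine subtlety is bookkeeping: one must be careful that the index shifts are legal (all subscripts stay $\ge -1$, which holds since $n,m\ge -1$ and the $|\bold L|$ are $\ge 0$) and that no convergence issue arises — but everything is a formal power series in $\bold s$ with locally finite coefficients, so this is harmless. I expect the antisymmetry cancellation and the double convolution-inverse identity to be the heart of the argument; once those are in place the rest is routine resummation.
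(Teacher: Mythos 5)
Your proposal is correct, but it takes a genuinely different route from the paper. The paper proves the Virasoro relations directly from the free-field realization: it writes $V_k=-\tfrac{1}{2}\sum_{\bold L}\gamma_{\bold L}\bold s^{\bold L}J_{2(k+|\bold L|)+3}+E_k$ and reduces everything to the Heisenberg-type commutators $[E_n,E_m]=(n-m)E_{n+m}$ and $[J_{2k+3},E_m]=\tfrac{2k+3}{2}J_{2(k+m)+3}$ (the $[J,J]$ terms vanishing because all indices there are positive). You instead treat Theorem 4.2 as a black box and transport its $\beta$-twisted bracket relation through the change of generators $V_k=\sum_{\bold L}\gamma_{\bold L}\bold s^{\bold L}\hat V_{k+|\bold L|}$; your two key steps — the antisymmetry cancellation of the $|\bold L|-|\bold M|$ term under $\bold L\leftrightarrow\bold M$, and the collapse of the double convolution $\sum_{\bold L+\bold M+\bold N=\bold c}\gamma_{\bold L}\gamma_{\bold M}\beta_{\bold N}=\gamma_{\bold c}$ via $\sum_{\bold M+\bold N=\bold d}\gamma_{\bold M}\beta_{\bold N}=\delta_{\bold d,\bold 0}$ — are both valid, and the inverse relation you invoke is exactly the recursion defining $\alpha_{\bold L}$ after the rescalings you describe (the stray sign $(-1)^{\|\bold L\|+\|\bold L'\|}=(-1)^{\|\bold b\|}$ factors out of the whole sum, so it causes no trouble). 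What your argument buys is generality: it is a purely formal Lie-algebraic statement that whenever a family $\hat V_k$ satisfies the $\beta$-deformed relations, the $\gamma$-transformed generators satisfy the standard Virasoro relations, with no reference to the explicit differential-operator form of $V_k$. What it costs is that it leans entirely on the commutator formula of Theorem 4.2, which the paper only asserts with ``one may check directly'' and whose cleanest verification is via the same $J$/$E$ decomposition the paper deploys for Theorem 4.3 — so the paper's route is the more self-contained of the two, while yours is the more structural.
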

\begin{proof}
Since
\begin{align*}
E_k=&\frac{1}{2} \sum_{j=0}^{\infty} \frac{(2(j+k)+1)!! }{(2j-1)!! }
t_j\frac{\partial }{\partial t_{j+k} } \\
&+\frac{1}{4} \sum_{d_1 + d_2 = k-1}(2d_1 + 1)!! (2d_2 + 1)!!
\frac{\partial^2 }{\partial t_{d_1} \partial t_{d_2}}
+\frac{\delta_{k,-1}t_0^2}{4} + \frac{\delta_{k,0} }{16}.
\end{align*}
We can check directly that
$$[E_n,E_m]=(n-m)E_{n+m},\qquad [J_{2k+3},E_m]=\frac{2k+3}{2}J_{2(k+m)+3}.$$
So we have
\begin{align*}
[V_n, V_m] &= \bigl[-\frac{1}{2} \sum_{\bold L}\gamma_{\bold L}\bold
s^{\bold L} J_{2(n+|\bold L|)+3 } + E_n, -\frac{1}{2}\sum_{\bold L}
\gamma_{\bold L}\bold s^{\bold L} J_{2(m+|\bold L|)+3 } + E_m\bigr] \\
&= -\frac{1}{2} \sum_{\bold L} \gamma_{\bold L}\bold s^{\bold L}
\left(\bigl[J_{2(n+|\bold L|)+3 }, E_m] + [E_n, J_{2(m+|\bold L|)+3 }\bigr]\right)+[E_n,E_m] \\
&=-\frac{1}{2} \sum_{\bold L}\gamma_{\bold L}\bold s^{\bold L}(n-m)J_{2(n+m+|\bold L|)+3}+(n-m)E_{n+m}\\
        &= (n-m) V_{n+m }.
\end{align*}
\end{proof}

Now we recall the KdV hierarchy, which is the following hierarchy of
differential equations for $n\geq 1$,
$$\frac{\partial U}{\partial t_n}=\frac{\partial}{\partial t_0}R_{n+1},$$
where $R_n$ are polynomials in $U,\partial U/\partial t_0,\partial^2
U/\partial t_0^2,\dots$, which is defined recursively by
$$R_1=U,\qquad \frac{\partial R_{n+1}}{\partial t_0}=\frac{1}{2n+1}\left(\frac{\partial U}{\partial t_0}R_n+2U\frac{\partial R_n}{\partial t_0}+
\frac{1}{4}\frac{\partial^3}{\partial t_0^3}R_n\right).$$ In
particular, it is easy to see that
$$R_2=\frac{1}{2}U^2+\frac{1}{12}\frac{\partial^2U}{\partial
t_0^2},$$ so the first equation in the KdV hierarchy is the
classical KdV equation
$$\frac{\partial U}{\partial t_1}=U\frac{\partial U}{\partial t_0}+\frac{1}{12}\frac{\partial^3 U}{\partial t_0^3}.$$

The Witten-Kontsevich theorem  \cite{Wi,Ko}
 states that the generating function for $\psi$ class
intersections
$$F(t_0, t_1, \ldots)= \sum_{g} \sum_{\bold n} \langle\prod_{i=0}^\infty \tau_{i}^{n_i}\rangle_{g} \prod_{i=0}^\infty \frac{t_i^{n_i} }{n_i!
        }$$
is a $\tau$-function for the KdV hierarchy, i.e. $\partial^2
F/\partial t_0^2$ obeys all equations in the KdV hierarchy.

\begin{theorem} We have
\begin{equation}
G(\bold s,t_0,t_1,\dots)=F(t_0,t_1,t_2+p_2,t_3+p_3,\dots),
\end{equation}
where $p_k$ are polynomials in $\bold s$ given by
$$p_k=-\sum_{|\bold L|=k-1}(2|\bold L|+1)!!\gamma_{\bold L}\bold s^{\bold L}=\sum_{|\bold L|=k-1}\frac{(-1)^{||\bold L||-1}}{\bold L!}\bold
s^{\bold L}.$$ In particular, for any fixed values of $\bold s$,
$G(\bold s,\bold t)$ is a $\tau$-function for the KdV hierarchy.
\end{theorem}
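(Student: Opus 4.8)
The plan is to deduce Theorem 4.4 from the Virasoro characterization of the generating functions. First I would observe that $F$ is characterized (up to the obvious normalization coming from the initial values $\langle\tau_0^3\rangle_0=1$, $\langle\tau_1\rangle_1=\tfrac1{24}$) as the unique formal series with $\exp(F)$ annihilated by the classical DVV--Virasoro operators $V_k^{(0)}$, $k\geq -1$, obtained from $V_k$ by setting $\bold s=\bold 0$ (equivalently, from $\hat V_k$ at $\bold s=\bold 0$). Likewise $G$ is characterized by $V_k\exp(G)=0$ for $k\geq -1$, together with the same initial values, which are exactly the termination cases listed in the proof of Theorem 4.3. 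So the statement $G(\bold s,\bold t)=F(t_0,t_1,t_2+p_2,t_3+p_3,\dots)$ will follow once I check that the shift $t_j\mapsto t_j+p_j$ (with $p_0=p_1=0$) intertwines the two families of operators, i.e. carries $V_k$ into $V_k^{(0)}$ after the change of variables, and respects the initial conditions.

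The cleanest way to run this is through the free-field form already set up in the excerpt. After the substitution $T_{2i+1}=t_i/(2i+1)!!$ one has $V_k=-\tfrac12\sum_{\bold L}\gamma_{\bold L}\bold s^{\bold L}J_{2k+2|\bold L|+3}+E_k$, where $E_k$ is the pure Virasoro piece and is \emph{independent of} $\bold s$. The shift $t_j\mapsto t_j+p_j$, i.e. $T_{2j+1}\mapsto T_{2j+1}+p_j/(2j+1)!!$, is a translation only in the positive modes $J_{2j+1}$, $j\geq 2$; under such a translation $E_k$ transforms by adding precisely a linear combination of the creation operators $J_{-(2\ell+1)}=(2\ell+1)T_{2\ell+1}$ contracted against the $p$'s, together with a constant. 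The key computation is to pick the coefficients $p_k$ so that the extra terms produced by translating $E_k$ exactly cancel the $\bold s$-dependent correction $-\tfrac12\sum_{\bold L}\gamma_{\bold L}\bold s^{\bold L}J_{2k+2|\bold L|+3}$. Using $[J_{2k+3},E_m]=\tfrac{2k+3}2 J_{2(k+m)+3}$ and the commutator relations recorded before Theorem 4.3, this cancellation is equivalent to the identity $p_k=-\sum_{|\bold L|=k-1}(2|\bold L|+1)!!\gamma_{\bold L}\bold s^{\bold L}$, which is exactly the formula asserted in the theorem; the second expression for $p_k$ is then just the definition $\gamma_{\bold L}=(-1)^{||\bold L||}/(\bold L!(2|\bold L|+1)!!)$.

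Concretely I would proceed in three steps. (1) Write $\widetilde V_k$ for the operator $V_k$ after the translation $t_j\mapsto t_j+p_j$; expand $\widetilde E_k$ as $E_k$ plus a first-order term $\sum_{j\geq 2}\frac{p_j}{(2j+1)!!}\cdot\frac{\partial E_k}{\partial T_{2j+1}}$ plus (for the quadratic-in-$T$ part of $E_k$, which is just the $\tfrac14\sum_{d_1+d_2=k-1}\partial^2$ piece) a constant; since only the term $\tfrac12\sum_{j}(2j+1)T_{2j+1}\partial/\partial T_{2(j+k)+1}$ in $E_k$ has positive-mode $T$-dependence, the translation correction is a genuine linear vector field $\sum_{j}(2j+1)\,p_j\,(2j+1)!!^{-1}\cdot(2(j+k)+1)!!\cdot\partial/\partial t_{j+k}$ — wait, more carefully one gets a term proportional to $\partial/\partial t_{j+k}$ with the index raised, i.e. exactly of the form that matches $J_{2(k+|\bold L|)+3}$ after reindexing. (2) Match coefficients: the vanishing of $\widetilde V_k - V_k^{(0)}$ for all $k\geq -1$ and all $\bold s$ reduces, after collecting the coefficient of each monomial $\bold s^{\bold L}$, to the stated recursion for $p_k$ (equivalently the closed formula). (3) Check the base case: at $\bold s=\bold 0$ we have $p_k=0$ so $G(\bold 0,\bold t)=F(\bold t)$ trivially, and the termination values $\langle\tau_0^3\rangle_0$, $\langle\tau_1\rangle_1$, $\langle\tau_0\kappa_1\rangle_1$ are consistent with the shift (the $\kappa_1$ value corresponds to the $s_1$-linear term and is built into the $\delta_{k,-1}s_1/48$ in $\hat V_{-1}$, hence into $p_2$). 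Then uniqueness of the solution of the Virasoro system with given initial data forces $G(\bold s,\bold t)=F(t_0,t_1,t_2+p_2,\dots)$ for all $\bold s$. Finally, since $F$ is a KdV $\tau$-function in $(t_0,t_1,\dots)$ by Witten--Kontsevich and the $p_k$ depend only on $\bold s$ (not on $t_0$ or $t_1$), $\partial^2 G/\partial t_0^2=\partial^2 F/\partial t_0^2$ evaluated at the shifted times still satisfies every KdV equation, giving the last assertion.

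The main obstacle is step (1)–(2): verifying that the translation of $E_k$ produces \emph{exactly} the operator $-\tfrac12\sum_{\bold L}\gamma_{\bold L}\bold s^{\bold L}J_{2(k+|\bold L|)+3}$ with no leftover quadratic or cross terms, and that the induced recursion on the $p_k$ coincides with the one defining them. This is a bookkeeping computation with double factorials and the reindexing $|\bold L|=k-1$, and the danger is an off-by-one in the half-integer mode labels or a missing symmetrization factor in the $\partial^2$ term; I would do it first at $\bold b$-degree one (a single $s_k$) to fix all constants, then observe the general case is formally identical since $E_k$ is $\bold s$-independent and the translation is linear.
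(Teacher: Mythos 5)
Your proposal is correct and takes essentially the same route as the paper: the authors also perform the substitution $\tilde t_i=t_i+p_i$ (for $i\geq 2$), observe that it carries the operators $V_k$ of (\ref{VK}) into the $\bold s=\bold 0$ specialization of $\hat V_k$, and conclude by uniqueness of the solution of the Virasoro constraints, with the KdV statement then following from Witten--Kontsevich exactly as you say. Your extra detour through the free-field modes $J_p$ and $E_k$ is a valid (if slightly heavier) way to verify the cancellation, which in the $t$-variables reduces to the one-line identity $(2|\bold L|+1)!!=(2j-1)!!$ for $|\bold L|=j-1$.
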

\begin{proof}
The change of variables
$$
    \tilde{t}_i = \begin{cases}
        t_i & \text{for $i=0,1$ }, \\
        t_i - \sum_{|\bold L|=i-1}(2|\bold L|+1)!!\gamma_{\bold L}\bold s^{\bold L} & \text{otherwise,}
    \end{cases}
$$
transforms the operators $V_k$ of (\ref{VK}) into
\begin{multline*}
    V_k = -\frac{1}{2}  (2k+3)!!
        \frac{\partial }{\partial \tilde{t}_{k+1} }
        + \frac{1}{2} \sum_{j=0}^{\infty} \frac{(2(j+k)+1)!! }{(2j-1)!! } \tilde{t}_j
        \frac{\partial }{\partial \tilde{t}_{j+k} } \\
    + \frac{1}{4} \sum_{d_1 + d_2 = k-1}
        (2d_1 + 1)!! (2d_2 + 1)!! \frac{\partial^2 }{\partial \tilde{t}_{d_1}
        \partial \tilde{t}_{d_2}}
        + \frac{\delta_{k,-1}\tilde{t}_0^2}{4} + \frac{\delta_{k,0}
        }{16},
 \end{multline*}
which is just the operator obtained by setting $\bold s=\bold0$ in
$\hat V_k$ of (\ref{VKhat}). Since Virasoro constraints uniquely
determine the generating functions $G(\bold s, t_0,t_1,\dots)$ and
$F(t_0,t_1,\dots)$, we have for any fixed values of $\bold s$,
$$G(\bold s, t_0,t_1,t_2,\dots)=F(\tilde{t}_0,\tilde{t}_1,\tilde{t}_2,\dots).$$
So we have proved the theorem.
\end{proof}

Theorem 4.4 can also be proved directly by applying Lemma 2.2, as
discussed in \cite{MZ}.

\vskip 30pt
\section{Tautological constants of Hodge integrals}

The results in this section can be applied to study Faber's perfect
pairing conjecture \cite{Fab} and its generalizations.

Let $\sM_{g,n}^{rt}$ be the moduli space of ``curves with rational
tails''(i.e. with dual graph with a vertex of genus $g$). Let
$\sM_{g,n}^{ct}$ be the moduli space of ``curves of compact type'',
(i.e. with dual graph with no loops). Hence
$$\sM_{g,n}^{rt} \subset \sM_{g,n}^{ct} \subset \overline{\sM}_{g,n}.$$

\begin{conjecture} (Faber, Hain, Looijenga, Pandharipande, et al.) The space
$\overline{\sM}_{g,n}$ (resp.\ $\sM_{g,n}^{rt}$, $\sM_{g,n}^{ct}$)
``behaves like'' a complex variety of dimension $D = 3g-3+n$ (resp.
 $g-2+n$, $2g-3+n$). More precisely, its tautological ring $R^*$
has the following properties.
\begin{itemize}
\item {\rm Socle statement:} $R^i = 0$ for $i>D$, $R^D \cong \mathbb Q$, and
\item {\rm Perfect pairing statement:} for $0 \leq i \leq D$, the natural map $R^i \times R^{D-i} \rightarrow
R^D$ is a perfect pairing.
\end{itemize}
\end{conjecture}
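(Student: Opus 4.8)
The plan is to split the statement into its two halves and attack them very differently. The socle statement is the accessible one: for $\sM_{g,n}^{rt}$ and $\sM_{g,n}^{ct}$ the vanishing $R^i=0$ in degrees $i>D$ together with $R^D\cong\mathbb Q$ are theorems of Looijenga, Graber--Vakil and Faber--Pandharipande, and for $\overline{\sM}_{g,n}$ the statement is immediate from $R^{3g-3+n}=\mathbb Q\cdot[\mathrm{pt}]$; I would take all of this as input. Granting the socle statement, the evaluation isomorphism $\epsilon\colon R^D\xrightarrow{\ \sim\ }\mathbb Q$ is realized concretely by a single integral over $\overline{\sM}_{g,n}$: against $1$ in the compactified case, against $\lambda_g$ in the compact-type case, and against $\lambda_g\lambda_{g-1}$ in the rational-tails case. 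Consequently the perfect pairing statement is equivalent to the purely numerical claim that, for every $i$, the Gram matrix
\[
M_i=\Bigl(\,\epsilon(\alpha_a\cdot\beta_b)\,\Bigr)_{a,b},\qquad \{\alpha_a\}\ \text{a basis of }R^i,\quad \{\beta_b\}\ \text{a basis of }R^{D-i},
\]
is invertible.

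The second step is to make every entry of $M_i$ effectively computable, which is exactly what the recursions of this paper supply. The tautological ring of each of the three spaces is generated as a $\mathbb Q$-algebra by the $\psi_i$, the $\kappa_j$ and classes pushed forward from boundary strata, so each entry $\epsilon(\alpha_a\cdot\beta_b)$ is a top monomial intersection number in these generators, weighted by $1$, $\lambda_g$ or $\lambda_g\lambda_{g-1}$. Applying the splitting axiom on the boundary strata together with Mumford's relations for the Chern classes of the Hodge bundle, every such number reduces to an integral $\int_{\overline{\sM}_{g,n}}\lambda_g^{\varepsilon}\,\lambda_{g-1}^{\varepsilon'}\,P$ with $P$ a monomial in the $\psi_i$ and $\kappa_j$ and $\varepsilon,\varepsilon'\in\{0,1\}$. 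Theorems 1.1 and 1.2 remove the $\kappa$ classes in favour of pure $\psi$-integrals; Theorems 1.3 and 1.4 dispose of the pure higher Weil--Petersson volumes that arise; and the Hodge-integral recursions developed in this section reduce the $\lambda_g$ and $\lambda_g\lambda_{g-1}$ insertions to the $\lambda$-free situation, which is computed by Witten--Kontsevich. Thus for each fixed $(g,n,i)$ the matrix $M_i$ is an explicit rational matrix built out of the ``tautological constants'' evaluated above, and the conjecture becomes an infinite family of determinant-nonvanishing statements indexed by $(g,n,i)$.

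The remaining step is to prove $\det M_i\neq 0$ uniformly in $(g,n,i)$, and this is where essentially all the difficulty lies. I would first verify the nonvanishing by machine for all $(g,n)$ in a large range using the maple implementation of Theorems 1.1--1.4, and then search for structure in the computed Gram matrices: either a closed product formula for $\det M_i$ in a well-chosen monomial basis, in the spirit of Faber's explicit conjectural intersection-number formulas, or an inductive argument in which the forgetful and gluing morphisms put $M_i$ at $(g,n)$ into block-triangular form over Gram matrices at smaller $(g,n)$, with invertible diagonal blocks controlled by the string, dilaton and Virasoro structure recorded in Propositions 3.1, 3.3 and 3.4 and the Virasoro and KdV relations of Section 4. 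Choosing the basis so that the recursions of this paper make this block structure manifest is the crux of the problem; short of achieving that, the content of the present section is that it reduces the perfect pairing conjecture for $\sM_{g,n}^{rt}$ and $\sM_{g,n}^{ct}$ to finitely many explicitly checkable computations in each genus.
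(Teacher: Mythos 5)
This statement is Conjecture 5.1 of the paper, and the paper does not prove it: it explicitly records that the socle statement is a theorem of Graber and Vakil, while the perfect pairing statement ``is still open.'' Your proposal does not prove it either, and you essentially concede this yourself. The reduction of the perfect pairing statement to the invertibility of the Gram matrices $M_i$ of the evaluation pairings (against $1$, $\lambda_g$, $\lambda_g\lambda_{g-1}$) is only a reformulation, and the recursions of the paper (Theorems 1.1--1.4 and the Hodge-integral recursions of Section 5) indeed make each entry of $M_i$ effectively computable --- this is precisely the role the paper assigns to them, namely as tools for computing intersection pairings, not as a proof mechanism. But the crucial step, $\det M_i\neq 0$ for all $(g,n,i)$, is exactly the conjecture, and your plan for it (machine verification in a range, then hoping for a product formula or a block-triangular structure induced by the forgetful and gluing maps and the Virasoro/KdV structure) is a research program, not an argument; finite verification in each genus or in any bounded range of $(g,n)$ cannot yield the statement, which is a uniform claim over infinitely many $(g,n,i)$.

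Two further inaccuracies are worth flagging. First, the socle statement for $\overline{\sM}_{g,n}$ is not ``immediate'': for the full tautological ring of the compactified space the vanishing in degrees above $3g-3+n$ and the one-dimensionality of $R^{3g-3+n}$ are nontrivial and are exactly what Graber--Vakil prove; treating it as obvious misstates the input you are allowed to assume. Second, your reduction of all pairing entries to integrals of $\psi$--$\kappa$ monomials against $\lambda_g$ or $\lambda_g\lambda_{g-1}$ over $\overline{\sM}_{g,n}$ quietly uses the splitting of boundary classes and Mumford's relations; that is fine for computation, but it does not interact with the invertibility question in any way that advances the proof. In short: there is no proof in the paper to compare against, and your proposal leaves the essential content (perfect pairing) unproved, so it cannot be accepted as a proof of the statement.
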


The socle statement has been proved by Graber and Vakil \cite{GV}.
While the perfect paring statement is still open.

By the above conjecture, tautological relations in $\sM_{g,n}^{rt}$
and $\sM_{g,n}^{ct}$ are determined respectively by the following
linear functionals, called intersection pairings.
\begin{align*}
 R^i(\sM_{g,n}^{rt})\times R^{g-2+n-i}(\sM_{g,n}^{rt})
&\longrightarrow \mathbb Q\\
(u,v) &\longmapsto
\int_{\overline{\sM}_{g,n}}uv\lambda_g\lambda_{g-1},
\end{align*}
and
\begin{align*}
R^i(\sM_{g,n}^{ct})\times R^{2g-3+n-i}(\sM_{g,n}^{ct})
&\longrightarrow \mathbb Q\\
(u,v) &\longmapsto \int_{\overline{\sM}_{g,n}}uv\lambda_g.
\end{align*}

Since tautological classes are represented by linear combinations of
decorated stable graphs, the computation of intersection pairings
will eventually reduce to the following integrals
\begin{align*}
&\int_{\overline{\sM}_{g,n}}\kappa_{b_1}\cdots\kappa_{b_k}\psi_1^{d_1}\cdots\psi_n^{d_n}\lambda_g\lambda_{g-1},\\
&\int_{\overline{\sM}_{g,n}}\kappa_{b_1}\cdots\kappa_{b_k}\psi_1^{d_1}\cdots\psi_n^{d_n}\lambda_g.
\end{align*}

Commonly, one would compute the above integrals by first eliminating
$\kappa$ classes, then applying the $\lambda_g\lambda_{g-1}$ theorem
or the $\lambda_g$ theorem.

Now we present more efficient recursion formulae computing these
integrals, their patterns may well give some implications of the
perfect pairing conjectures.

From degree 0 Virasoro constraints for a surface, Getzler and
Pandharipande \cite{Ge-Pa} obtained the following recursion.
\begin{lemma}\cite{Ge-Pa}
Let $d,d_0\geq0$ and $d_j\geq 1$ for $j\geq1$.
\begin{align*}
\langle\tau_{d}\tau_{d_0}\prod_{j=1}^n\tau_{d_j}\mid\lambda_g\lambda_{g-1}\rangle_g=\frac{(2d+2d_0-1)!!}{(2d-1)!!(2d_0-1)!!}
\langle\tau_{d_0+d-1}\prod_{j=1}^n\tau_{d_j}\mid\lambda_g\lambda_{g-1}\rangle_g\\
+\sum_{j=1}^n\frac{(2d+2d_j-3)!!}{(2d-1)!!(2d_j-3)!!}\langle\tau_{d_0}\tau_{d_j+d-1}\prod_{i\neq
j}\tau_{d_i}\mid\lambda_g\lambda_{g-1}\rangle_g
\end{align*}
\end{lemma}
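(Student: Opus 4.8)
Since this recursion is the one obtained by Getzler and Pandharipande, the plan is to recall their argument, which passes through the degree--zero Virasoro constraints for the Gromov--Witten theory of a surface. First I would take $S=\mathbb P^2$ (any surface with $h^{1,0}(S)=h^{2,0}(S)=0$ works) and restrict the Virasoro operators to the sector of constant stable maps, where $\overline{\sM}_{g,n}(S,0)\cong\overline{\sM}_{g,n}\times S$ carries the obstruction bundle $\mathbb E^{\vee}\boxtimes T_S$. Mumford's Grothendieck--Riemann--Roch expression for the Chern character of $\mathbb E$, together with the expansion of $c_{\mathrm{top}}(\mathbb E^{\vee}\boxtimes T_S)$ in Chern roots, shows that the only surviving contribution of a descendant insertion of the point class $\mathrm{pt}=H^2$ is $\int_{\overline{\sM}_{g,n}}\psi^{a}\cdots\lambda_g\lambda_{g-1}$ up to an explicit rational constant, while insertions of $1$ and of the hyperplane class $H$ carry the remaining descendant data. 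Hence the $\lambda_g\lambda_{g-1}$ Hodge integrals form a closed subsystem inside the descendant potential of $\mathbb P^2$.

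Next I would write out the degree--zero constraint $L_0$ applied to the potential and extract the coefficient of the insertion profile that isolates the pure $\lambda_g\lambda_{g-1}$ piece (all but two marked points carrying $\mathrm{pt}$, the remaining two playing the roles of $\tau_d$ and of the dilaton--type insertion). The two summands of $L_0$ --- the one that drops a descendant degree and merges it with the distinguished insertion, and the one that redistributes that degree among the remaining marked points --- are precisely what produce the term $\langle\tau_{d_0+d-1}\prod\tau_{d_j}\mid\lambda_g\lambda_{g-1}\rangle_g$ and the sum $\sum_{j}\langle\tau_{d_0}\tau_{d_j+d-1}\prod_{i\ne j}\tau_{d_i}\mid\lambda_g\lambda_{g-1}\rangle_g$, and the double--factorial weights $(2d+2d_0-1)!!/((2d-1)!!(2d_0-1)!!)$ and the $(2d+2d_j-3)!!$--type ratios are exactly the ratios of the normalization constants produced by the point--class reduction at the merged marked point; the hypothesis $d_j\ge1$ for $j\ge1$ is what keeps $(2d_j-3)!!$ a genuine double factorial and the relevant insertions of point type. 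I expect the main obstacle to be precisely this bookkeeping: the Virasoro operator mixes insertions of $1$, $H$ and $\mathrm{pt}$, but only $\mathrm{pt}$--insertions reduce to $\lambda_g\lambda_{g-1}$, so one must choose the profile carefully and check that all ``impurely weighted'' terms cancel, leaving a clean recursion among pure Hodge integrals.

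A short self--contained alternative, which is what I would actually write up, is to combine the lemma with the known evaluation
$$\int_{\overline{\sM}_{g,m}}\psi_1^{d_1}\cdots\psi_m^{d_m}\,\lambda_g\lambda_{g-1}=\frac{(2g+m-3)!\,|B_{2g}|}{2^{2g-1}(2g)!}\,\prod_{i=1}^{m}\frac{1}{(2d_i-1)!!},\qquad \sum_{i=1}^{m}d_i=g-2+m$$
(equivalent to Faber's intersection--number conjecture, a consequence of the Virasoro constraints for $\mathbb P^2$). Substituting this into the lemma with $m=n+2$ on the left and $m=n+1$ in each term on the right, the common factor $\frac{|B_{2g}|}{2^{2g-1}(2g)!}\cdot\frac{1}{(2d-1)!!\,(2d_0-1)!!}\prod_{j=1}^{n}\frac{1}{(2d_j-1)!!}$ cancels, the elementary identities $(2d+2d_0-1)!!/(2d+2d_0-3)!!=2d+2d_0-1$, $(2d+2d_j-3)!!/(2d_j+2d-3)!!=1$ and $(2d_j-1)!!/(2d_j-3)!!=2d_j-1$ collapse the coefficients, and the claim reduces to $(2g+n-1)!=(2g+n-2)!\bigl((2d+2d_0-1)+\sum_{j=1}^{n}(2d_j-1)\bigr)$, i.e.\ to $(2d+2d_0-1)+\sum_{j=1}^{n}(2d_j-1)=2g+n-1$. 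This last relation is immediate from the dimension constraint $d+d_0+\sum_{j=1}^{n}d_j=g+n$ on $\overline{\sM}_{g,n+2}$, since then $\sum_{j=1}^{n}(2d_j-1)=2g+n-2d-2d_0$. This route uses only the cited evaluation and manipulation of double factorials.
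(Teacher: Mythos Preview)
The paper does not prove this lemma; it is quoted verbatim from Getzler--Pandharipande with the one-line attribution ``From degree 0 Virasoro constraints for a surface, Getzler and Pandharipande obtained the following recursion.'' So your first paragraph is, in spirit, the paper's own argument: extract the constant-map sector of the Virasoro constraints for $\mathbb P^2$ and observe that point-class descendants reduce to $\lambda_g\lambda_{g-1}$ integrals. One small correction there: it is not the single operator $L_0$ that produces this identity. ``Degree~0'' in the paper refers to the curve class (constant maps), not to the Virasoro index; the recursion for a given value of $d$ comes from the corresponding higher operator $L_d$ (in the usual normalisation), since it is the index of $L$ that governs the shift $\tau_{d_0}\mapsto\tau_{d_0+d-1}$ and $\tau_{d_j}\mapsto\tau_{d_j+d-1}$.

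Your second route---plugging in the closed evaluation
\[
\langle\tau_{d_1}\cdots\tau_{d_m}\mid\lambda_g\lambda_{g-1}\rangle_g
=\frac{(2g+m-3)!\,|B_{2g}|}{2^{2g-1}(2g)!}\prod_{i=1}^{m}\frac{1}{(2d_i-1)!!}
\]
and reducing to the dimension identity $(2d+2d_0-1)+\sum_{j=1}^n(2d_j-1)=2g+n-1$---is arithmetically correct and pleasantly short. Be aware, however, that in \cite{Ge-Pa} this very closed form is \emph{derived} from the recursion you are trying to prove (together with the one-point initial value), so as written the argument is circular unless you cite an independent proof of the evaluation (e.g.\ via Hurwitz theory/ELSV or localisation). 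If you do that, your second approach is a genuinely different, more elementary verification than what the paper invokes; its advantage is that it bypasses the operator bookkeeping entirely, while the Virasoro route has the advantage of not presupposing the closed form and of generalising directly to the $\kappa$-decorated version in Theorem~5.3.
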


Lemma 5.2 has the following generalization.
\begin{theorem}
Let $\bold b\in N^{\infty}$, $d,d_0\geq0$ and $d_j\geq 1$ for
$j\geq1$. Then
\begin{multline*}
\sum_{\bold L+\bold{L'}=\bold b}(-1)^{||\bold L||}\binom{\bold
b}{\bold L}\frac{(2d+2|\bold L|-1)!!}{(2|\bold
L|-1)!!}\langle\tau_{d+|\bold
L|}\tau_{d_0}\prod_{j=1}^n\tau_{d_j}\kappa(\bold{L'})\mid\lambda_g\lambda_{g-1}\rangle_g\\
=\frac{(2d+2d_0-1)!!}{(2d_0-1)!!}
\langle\tau_{d_0+d-1}\prod_{j=1}^n\tau_{d_j}\kappa(\bold{b})\mid\lambda_g\lambda_{g-1}\rangle_g\\
+\sum_{j=1}^n\frac{(2d+2d_j-3)!!}{(2d_j-3)!!}\langle\tau_{d_0}\tau_{d_j+d-1}\prod_{i\neq
j}\tau_{d_i}\kappa(\bold{b})\mid\lambda_g\lambda_{g-1}\rangle_g
\end{multline*}
and
\begin{multline*}
\langle\tau_{d}\tau_{d_0}\prod_{j=1}^n\tau_{d_j}\kappa(\bold
b)\mid\lambda_g\lambda_{g-1}\rangle_g\\
=\sum_{\bold L+\bold{L'}=\bold b}\gamma_{\bold L}\binom{\bold
b}{\bold L}\frac{(2d+2d_0+2|\bold L|-1)!!}{(2d-1)!!(2d_0-1)!!}
\langle\tau_{d_0+d+|\bold L|-1}\prod_{j=1}^n\tau_{d_j}\kappa(\bold{L'})\mid\lambda_g\lambda_{g-1}\rangle_g\\
+\sum_{\bold L+\bold{L'}=\bold b}\sum_{j=1}^n\gamma_{\bold
L}\binom{\bold b}{\bold L}\frac{(2d+2d_j+2|\bold
L|-3)!!}{(2d-1)!!(2d_j-3)!!}\langle\tau_{d_0}\tau_{d_j+d+|\bold
L|-1}\prod_{i\neq
j}\tau_{d_i}\kappa(\bold{L'})\mid\lambda_g\lambda_{g-1}\rangle_g
\end{multline*}
where $\gamma_{\bold L}\in\mathbb Q$ can be determined recursively
from the following formula
$$\sum_{\bold L+\bold{L'}=\bold b}\frac{(-1)^{||\bold L||}\gamma_{\bold L}}{\bold L!\bold{L'}!(2|\bold{L'}|-1)!!}=0,\qquad \bold b\neq0,$$
with the initial value $\gamma_{\bold 0}=1$.

\end{theorem}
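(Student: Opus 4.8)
The plan is to mimic the proof of Theorems 1.1 and 1.2 as closely as possible, replacing pure $\psi$-class intersection numbers by the $\lambda_g\lambda_{g-1}$-weighted ones and replacing the DVV recursion by the Getzler--Pandharipande recursion of Lemma 5.2. First I would prove the first displayed identity. Apply Lemma 2.2 to expand $\langle\tau_d\tau_{d_0}\prod_j\tau_{d_j}\kappa(\bold b)\mid\lambda_g\lambda_{g-1}\rangle_g$ into a sum over set partitions of $\langle\tau_d\tau_{d_0}\prod_j\tau_{d_j}\prod_i\tau_{|\bold m_i|+1}\mid\lambda_g\lambda_{g-1}\rangle_g$, then apply Lemma 5.2 (with $\tau_d$ as the distinguished insertion) to each such pure-$\psi$ bracket. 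The terms of Lemma 5.2 fall into two groups: the ``$\tau_{d_0+d-1}$'' term, and the ``$\tau_{d_j+d-1}$'' terms, where now $j$ may index either an original marked point $d_j$ or one of the newly created points $\tau_{|\bold m_i|+1}$. The first group and the part of the second group indexed by original points reassemble, via Lemma 2.2 run backwards, into exactly the first two terms on the right-hand side. The part of the second group indexed by the new points $\tau_{|\bold m_i|+1}$ is the ``extra'' term, and — exactly as in the proof of Theorem 1.1 — reorganizing the sum by pulling out one block $\bold L$ (with a combinatorial factor $k+1$ cancelling the $1/(k+1)!$) turns it into $\sum_{\bold L+\bold L'=\bold b,\,\bold L\neq\bold 0}(-1)^{\|\bold L\|}\binom{\bold b}{\bold L}\frac{(2d+2|\bold L|-1)!!}{(2|\bold L|-1)!!}\langle\tau_{d+|\bold L|}\tau_{d_0}\prod_j\tau_{d_j}\kappa(\bold L')\mid\lambda_g\lambda_{g-1}\rangle_g$. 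Moving this to the left-hand side and adding back the $\bold L=\bold 0$ term yields the first identity.

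The one subtlety I would flag here is bookkeeping around the constraint ``$d_j\ge 1$ for $j\ge1$'' in Lemma 5.2: the newly created insertions $\tau_{|\bold m_i|+1}$ always have index $\ge 2\ge 1$, so applying Lemma 5.2 is legitimate, but I should check that the $\tau_{d_0}$ with its ``$(2d_0-1)!!$'' in the denominator (where $d_0$ is allowed to be $0$) is handled consistently with the double-factorial conventions; this is the kind of edge-case verification the phrase ``only the quadratic term needs a careful verification'' is doing in the proof of Theorem 1.1, and here it is the interplay of $(2d_0-1)!!$, $(2d_j-3)!!$ and the combinatorial reindexing that needs the analogous care.

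For the second displayed identity I would invoke Lemma 2.1 exactly as in the proof of Theorem 1.2. Set
$$F(\bold b,d)=\frac{(2d-1)!!}{\bold b!}\langle\tau_d\tau_{d_0}\prod_{j=1}^n\tau_{d_j}\kappa(\bold b)\mid\lambda_g\lambda_{g-1}\rangle_g$$
and let $G(\bold b,d)$ be $1/\bold b!$ times the right-hand side of the first identity divided by $(2d-1)!!$, i.e.
$$G(\bold b,d)=\frac{1}{\bold b!}\Big(\frac{(2d+2d_0-1)!!}{(2d-1)!!(2d_0-1)!!}\langle\tau_{d_0+d-1}\textstyle\prod_j\tau_{d_j}\kappa(\bold b)\mid\lambda_g\lambda_{g-1}\rangle_g+\textstyle\sum_j\frac{(2d+2d_j-3)!!}{(2d-1)!!(2d_j-3)!!}\langle\cdots\rangle_g\Big),$$
but with each $\kappa(\bold b)/\bold b!$ replaced by $\kappa(\bold L')/(\bold L!\bold L'!)$ as dictated by the first identity. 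Then the first identity reads precisely $\sum_{\bold L+\bold L'=\bold b}\frac{(-1)^{\|\bold L\|}}{\bold L!(2|\bold L|-1)!!}F(\bold L',d+|\bold L|)=G(\bold b,d)$, so the pair $(\alpha_{\bold L},\beta_{\bold L})$ with $\beta_{\bold L}=(-1)^{\|\bold L\|}/(\bold L!(2|\bold L|-1)!!)$ and $\alpha_{\bold L}=\gamma_{\bold L}/\bold L!$ (with $\gamma_{\bold L}$ defined by the stated recursion, which is exactly the $\alpha\beta$-convolution vanishing condition of Lemma 2.1 together with $\alpha_{\bold 0}\beta_{\bold 0}=1$) satisfies the hypotheses of Lemma 2.1. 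Inverting gives $F(\bold b,d)=\sum_{\bold L+\bold L'=\bold b}\frac{\gamma_{\bold L}}{\bold L!}G(\bold L',d+|\bold L|)$, and multiplying out by $\bold b!/(2d-1)!!$ and using $\binom{\bold b}{\bold L}=\bold b!/(\bold L!\bold L'!)$ reproduces the second identity verbatim.

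The main obstacle will be the first identity's ``extra term'' reorganization together with the double-factorial edge cases: one must verify that, after applying Lemma 5.2 inside the Lemma 2.2 expansion, the terms indexed by newly-created points really do resum (with the correct sign $(-1)^{\|\bold L\|}$ and the correct ratio $(2d+2|\bold L|-1)!!/(2|\bold L|-1)!!$ coming from treating a $\tau_{|\bold m_i|+1}$ as a ``$d_j$'' in the second line of Lemma 5.2, where $d_j=|\bold m_i|+1$ so $(2d_j-3)!!=(2|\bold m_i|-1)!!$) into the left-hand side of the desired identity; everything else is a formal application of Lemmas 2.1 and 2.2 already carried out for Theorems 1.1 and 1.2.
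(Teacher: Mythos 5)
Your proposal is correct and is exactly the argument the paper intends: the paper states Theorem 5.3 without proof precisely because it follows verbatim from the Section 2 machinery (Lemma 2.2 plus the relevant two-point recursion for the first identity, then Lemma 2.1 for the inversion), with the DVV relation replaced by the Getzler--Pandharipande recursion of Lemma 5.2. Your bookkeeping of the extra term from the new insertions $\tau_{|\bold{m_i}|+1}$ (giving the ratio $(2d+2|\bold L|-1)!!/(2|\bold L|-1)!!$ and the sign $(-1)^{||\bold L||}$) and your identification of $\alpha_{\bold L}=\gamma_{\bold L}/\bold L!$, $\beta_{\bold L}=(-1)^{||\bold L||}/(\bold L!(2|\bold L|-1)!!)$ in Lemma 2.1 are both right.
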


\begin{corollary}In Theorem 5.3, we have
$$\gamma_l=\frac{E_l}{(2l-1)!!},\quad \gamma(\underbrace{0,\dots,0,1}_{l})=\frac{1}{(2l-1)!!}$$
where $E_l$ are the Euler numbers that satisfy
$$\sec x=\frac{1}{\cos x}=\sum_{k=0}^\infty\frac{E_k}{(2k)!}x^{2k}=1+\frac{1}{2!}x^2+\frac{5}{4!}x^4+\frac{61}{6!}x^6
+\frac{1385}{8!}x^8+\frac{50521}{10!}x^{10}+\cdots.$$
\end{corollary}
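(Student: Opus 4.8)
The plan is to specialize the defining recursion for the constants $\gamma_{\bold L}$ of Theorem 5.3 to sequences $\bold b$ supported in a single coordinate, and then to resolve the resulting one-variable identities by elementary generating-function manipulations, exactly parallel to the treatment of the constants $\alpha_{\bold L}$ of Theorem 1.2.

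First consider $\gamma(\bm{\delta}_l)$, the value at the sequence with a single $1$ in the $l$-th place. Setting $\bold b=\bm{\delta}_l$ in
$$\sum_{\bold L+\bold{L'}=\bold b}\frac{(-1)^{||\bold L||}\gamma_{\bold L}}{\bold L!\,\bold{L'}!\,(2|\bold{L'}|-1)!!}=0,$$
the only decompositions are $(\bold L,\bold{L'})=(\bold 0,\bm{\delta}_l)$ and $(\bm{\delta}_l,\bold 0)$; using $\bm{\delta}_l!=1$, $|\bm{\delta}_l|=l$, $\gamma_{\bold 0}=1$ and $(-1)!!=1$, this reduces to $\frac{1}{(2l-1)!!}-\gamma(\bm{\delta}_l)=0$, i.e. $\gamma(\bm{\delta}_l)=1/(2l-1)!!$.

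For $\gamma_l:=\gamma(l,0,0,\dots)$ I would restrict the recursion to $\bold b=n\bm{\delta}_1$. Then necessarily $\bold L=k\bm{\delta}_1$ and $\bold{L'}=(n-k)\bm{\delta}_1$, so $||\bold L||=k$, $\bold L!=k!$, $\bold{L'}!=(n-k)!$, $|\bold{L'}|=n-k$, and the recursion reads
$$\sum_{k=0}^{n}\frac{(-1)^k\gamma_k}{k!\,(n-k)!\,(2(n-k)-1)!!}=0\quad(n\geq 1),\qquad\text{with}\quad\gamma_0=1.$$
Introduce $A(x):=\sum_{k\geq 0}\frac{(-1)^k\gamma_k}{k!}x^k$. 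Since $(2m)!=2^m m!\,(2m-1)!!$, these relations are equivalent to
$$A(x)\cdot\sum_{m\geq 0}\frac{2^m}{(2m)!}x^m=1.$$

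The last step is to read off the coefficients. Substituting $x=-u^2/2$ turns the second factor into $\sum_{m\geq 0}\frac{(-u^2)^m}{(2m)!}=\cos u$, so $A(-u^2/2)\cos u=1$, whence $A(-u^2/2)=\sec u=\sum_{k\geq 0}\frac{E_k}{(2k)!}u^{2k}$ by the stated expansion of $\sec$. On the other hand $A(-u^2/2)=\sum_{k\geq 0}\frac{\gamma_k}{2^k k!}u^{2k}$, so comparing coefficients of $u^{2k}$ gives $\frac{\gamma_k}{2^k k!}=\frac{E_k}{(2k)!}$, hence $\gamma_k=\frac{2^k k!}{(2k)!}E_k=\frac{E_k}{(2k-1)!!}$, again using $(2k)!=2^k k!\,(2k-1)!!$. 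The only points needing care are the double-factorial bookkeeping and the substitution $x=-u^2/2$; I anticipate no genuine obstacle. The conceptual content is simply that the weights $1/(m!(2m-1)!!)$ sum to $\cos$ after this substitution, so that the convolution-inverse constants $\gamma_k$ are governed by its reciprocal $\sec$ and therefore reproduce the Euler (secant) numbers $E_k$, in contrast to the Bernoulli numbers arising from the $(2|\bold{L'}|+1)!!$ weights in Theorem 1.2.
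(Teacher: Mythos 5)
Your proposal is correct and follows essentially the same route as the paper: restrict the defining convolution recursion to sequences supported in one place, recognize the resulting generating function as $\cos$ (the paper writes $\cos(\sqrt{2}x)=\sum_k\frac{(-1)^k}{k!(2k-1)!!}x^{2k}$, which is your substitution up to rescaling the variable), and invert to $\sec$ to read off the Euler numbers. Your explicit verification of $\gamma(\bm{\delta}_l)=1/(2l-1)!!$ is a small step the paper leaves implicit, but it is the same one-line computation anyone would do.
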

\begin{proof}
We have
$$\cos(\sqrt{2}x)=\sum_{k=0}^\infty\frac{(-1)^k}{k!(2k-1)!!}x^{2k},$$
by Theorem 5.3,
$$\sec(\sqrt{2}x)=\sum_{k=0}^\infty\frac{\gamma_k}{k!}x^{2k}.$$
So we get the formula of $\gamma_l$.
\end{proof}

The following recursion follows from degree $0$ Virasoro constraints
for a curve.
\begin{lemma}\cite{Ge-Pa}
Let $d,d_0\geq0$ and $d_j\geq 1$ for $j\geq1$.
\begin{align*}
\langle\tau_{d}\tau_{d_0}\prod_{j=1}^n\tau_{d_j}\mid\lambda_g\rangle_g=\binom{d+d_0}{d_0}
\langle\tau_{d_0+d-1}\prod_{j=1}^n\tau_{d_j}\mid\lambda_g\rangle_g\\
+\sum_{j=1}^n\binom{d_j+d-1}{d_j-1}\langle\tau_{d_0}\tau_{d_j+d-1}\prod_{i\neq
j}\tau_{d_i}\mid\lambda_g\rangle_g,
\end{align*}
\end{lemma}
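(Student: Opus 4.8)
The plan is to derive the recursion directly from the $\lambda_g$ theorem \cite{Ge-Pa}, in the same way the $\lambda_g\lambda_{g-1}$ analogue (Lemma 5.2) is a shadow of the $\lambda_g\lambda_{g-1}$ theorem. Recall that whenever $\sum_{i=1}^m a_i = 2g-3+m$ one has
$$\langle\tau_{a_1}\cdots\tau_{a_m}\mid\lambda_g\rangle_g=\binom{2g-3+m}{a_1,\dots,a_m}\,b_g,$$
where the constant $b_g$ depends only on $g$ (with $b_1=\tfrac1{24}$). First I would do the dimension bookkeeping: on the left side of the lemma there are $n+2$ marked points and $d+d_0+\sum_{j=1}^n d_j=2g-3+(n+2)=:N$, while each of the three brackets on the right has $n+1$ marked points and total descendent degree $N-1=2g-3+(n+1)$, so the closed formula above applies verbatim to every term.

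Next I would substitute that formula throughout; the common factor $b_g$ cancels, and the lemma becomes the purely combinatorial identity
$$\binom{N}{d,d_0,d_1,\dots,d_n}=\binom{d+d_0}{d_0}\binom{N-1}{d+d_0-1,d_1,\dots,d_n}+\sum_{j=1}^n\binom{d_j+d-1}{d_j-1}\binom{N-1}{d_0,d_j+d-1,d_1,\dots,\widehat{d_j},\dots,d_n}.$$
This is verified by factoring $\dfrac{(N-1)!}{d!\,d_0!\,\prod_i d_i!}$ out of every term on the right: since $(d+d_0)!/(d+d_0-1)!=d+d_0$ the first term contributes the weight $d+d_0$, and cancelling $(d_j+d-1)!$ in the $j$-th summand leaves the weight $d_j$; the weights add up to $(d+d_0)+\sum_j d_j=N$, and $N\cdot\dfrac{(N-1)!}{d!\,d_0!\,\prod_i d_i!}$ is precisely $\binom{N}{d,d_0,d_1,\dots,d_n}$. (A term with a negative upper index, occurring only when $d+d_0=0$, drops out under the convention $\tau_{-1}=0$, consistently with the string equation.)

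Thus, granting the $\lambda_g$ theorem, the whole content of the lemma is the one-line multinomial identity above, and there is essentially no obstacle. An obstacle appears only if one wants a self-contained argument: then one follows \cite{Ge-Pa} and extracts the $\lambda_g$ brackets from the degree-zero, genus-$g$ part of the descendent Gromov--Witten potential of a curve, and imposes the degree-$0$ Virasoro operator $L_0$. The delicate step there is matching the single operator $L_0$ with the two families of coefficients --- the ``string/dilaton-type'' factor $\binom{d+d_0}{d_0}$ governing the absorption of $\tau_d$ into $\tau_{d_0}$, and the ``transport'' factors $\binom{d_j+d-1}{d_j-1}$ for the remaining insertions --- but this is the same shape of computation as in the $\lambda_g\lambda_{g-1}$ case behind Lemma 5.2 that the paper already invokes, so it introduces no genuinely new difficulty.
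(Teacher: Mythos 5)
Your proposal is correct, but it proves the lemma by a genuinely different route than the one the paper relies on. The paper gives no argument at all: it quotes the recursion from \cite{Ge-Pa}, where it is extracted from the degree-zero Virasoro ($L_0$) constraint for a target curve, exactly as your last paragraph sketches. Your main argument instead assumes the closed-form evaluation $\langle\tau_{a_1}\cdots\tau_{a_m}\mid\lambda_g\rangle_g=\binom{2g-3+m}{a_1,\dots,a_m}b_g$ and reduces the lemma to the multinomial identity you state; your verification is right (the first term carries weight $d+d_0$, the $j$-th summand weight $d_j$, and the weights sum to $N=2g-1+n$), the dimension bookkeeping is right, and the $d=d_0=0$ edge case is handled consistently since that term has weight zero. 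Two caveats are worth recording. First, attribution: the closed formula is the $\lambda_g$ conjecture of Faber, proved by Faber and Pandharipande (Ann.\ of Math.\ 2003); Getzler--Pandharipande only showed that the Virasoro constraints imply it, and they did so precisely by first deriving this recursion and then solving it together with the string equation. So citing \cite{Ge-Pa} for the closed formula inverts their logic, and to avoid circularity you must invoke the independent Faber--Pandharipande proof. Second, what each route buys: granting the $\lambda_g$ theorem, your argument is a one-line combinatorial check, which is attractive; the Virasoro route derives the recursion without knowing the evaluation, which is why it serves as a genuine recursion and is the form that the paper then generalizes to mixed $\kappa$--$\psi$ integrals (Theorem 5.6) via Lemma 2.2 --- a generalization that does not follow from the closed formula alone, since $\kappa$-decorated $\lambda_g$ integrals admit no such simple evaluation.
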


Lemma 5.5 has the following generalization.
\begin{theorem}
Let $\bold b\in N^{\infty}$, $d,d_0\geq0$ and $d_j\geq 1$ for
$j\geq1$.
\begin{multline*}
\sum_{\bold L+\bold{L'}=\bold b}\binom{\bold b}{\bold
L}(-1)^{||\bold
L||}\frac{(d+|\bold L|)!}{|\bold L|!}\langle\tau_{d}\tau_{d_0}\prod_{j=1}^n\tau_{d_j}\kappa(\bold{L'})\mid\lambda_g\rangle_g\\
=\frac{(d+d_0)!}{d_0!}
\langle\tau_{d_0+d-1}\prod_{j=1}^n\tau_{d_j}\kappa(\bold{b})\mid\lambda_g\rangle_g\\
+\sum_{j=1}^n\frac{(d_j+d-1)!}{(d_j-1)!}\langle\tau_{d_0}\tau_{d_j+d-1}\prod_{i\neq
j}\tau_{d_i}\kappa(\bold{b})\mid\lambda_g\rangle_g
\end{multline*}
and
\begin{multline*}
\langle\tau_{d}\tau_{d_0}\prod_{j=1}^n\tau_{d_j}\kappa(\bold
b)\mid\lambda_g\rangle_g\\
=\sum_{\bold L+\bold{L'}=\bold b}\gamma_{\bold L}\binom{\bold
b}{\bold L}\frac{(d+d_0+|\bold L|)!}{d_0!d!}
\langle\tau_{d_0+d+|\bold L|-1}\prod_{j=1}^n\tau_{d_j}\kappa(\bold{L'})\mid\lambda_g\rangle_g\\
+\sum_{\bold L+\bold{L'}=\bold b}\sum_{j=1}^n\gamma_{\bold
L}\binom{\bold b}{\bold L}\frac{(d_j+d+|\bold
L|-1)!}{(d_j-1)!d!}\langle\tau_{d_0}\tau_{d_j+d+|\bold
L|-1}\prod_{i\neq
j}\tau_{d_i}\kappa(\bold{L'})\mid\lambda_g\rangle_g
\end{multline*}
where $\gamma_{\bold L}\in\mathbb Q$ can be determined recursively
from the following formula
$$\sum_{\bold L+\bold{L'}=\bold b}\frac{(-1)^{||\bold L||}\gamma_{\bold L}}{\bold L!\bold{L'}!|\bold{L'}|!}=0,\qquad \bold b\neq0,$$
with the initial value $\gamma_{\bold 0}=1$.
\end{theorem}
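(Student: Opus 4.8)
The plan is to reuse the mechanism that already produced Theorems 1.1 and 1.2 from the Witten--Kontsevich recursion, now starting from Getzler--Pandharipande's Lemma 5.5 instead. The first step is to prove the first displayed identity. I would expand $\kappa(\bold b)$ inside the bracket $\langle\tau_d\tau_{d_0}\prod\tau_{d_j}\kappa(\bold b)\mid\lambda_g\rangle_g$ by Lemma 2.2, writing it as a signed sum over partitions $\bold b = \bold m_1 + \cdots + \bold m_k$ of brackets with extra insertions $\prod_{i}\tau_{|\bold m_i|+1}$ and only $\psi$ classes (and $\lambda_g$). To each such pure-$\psi$ bracket apply Lemma 5.5 with the distinguished point carrying $\tau_d$; this splits the right side into three groups according to whether the point absorbed by $\tau_d$ is $\tau_{d_0}$, one of the original $\tau_{d_j}$, or one of the new $\tau_{|\bold m_i|+1}$. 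The first two groups recombine, via Lemma 2.2 read backwards, into exactly the first and second terms of the claimed identity with $\kappa(\bold b)$ restored. The third group, where $\tau_d$ merges with some $\tau_{|\bold m_i|+1}$ to give $\tau_{d+|\bold m_i|}$, is precisely what the left-hand side's $\bold L \neq \bold 0$ terms encode: I would isolate the summand $\bold L = \bold m_i$, use the multinomial bookkeeping $\binom{\bold b}{\bold m_1,\dots,\bold m_k} = \binom{\bold b}{\bold L}\binom{\bold b - \bold L}{\dots}$, and the factor $(k+1)$ from choosing which block plays the role of $\bold L$, so that after reindexing $k \mapsto k-1$ the whole third group reassembles into $-\sum_{\bold L \neq \bold 0}(-1)^{\|\bold L\|}\binom{\bold b}{\bold L}\frac{(d+|\bold L|)!}{|\bold L|!}\langle\tau_d\tau_{d_0}\prod\tau_{d_j}\kappa(\bold L')\mid\lambda_g\rangle_g$. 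Moving this to the left side gives the first identity. The combinatorial sign $(-1)^{\|\bold L\|}$ and the binomial coefficient $\frac{(d+|\bold L|)!}{|\bold L|!} = \binom{d+|\bold L|}{|\bold L|}\,d!$ match the shape of Lemma 5.5's coefficients $\binom{d+d_0}{d_0}$ and $\binom{d_j+d-1}{d_j-1}$ after the $\tau_{|\bold m_i|+1}$ substitution, which is the reason the pattern propagates.

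For the second identity I would invoke Lemma 2.1. Set $F(\bold b, d) = \frac{1}{\bold b!}\langle\tau_d\tau_{d_0}\prod\tau_{d_j}\kappa(\bold b)\mid\lambda_g\rangle_g$ (the choice $n \mapsto d$, $\bold b \mapsto \bold b$ in the lemma), divide the first identity through by $\bold b!\,d!$, and recognize it as
$$\sum_{\bold L + \bold{L'} = \bold b}\frac{(-1)^{\|\bold L\|}}{\bold L!\,|\bold L|!}\,F(\bold{L'}, d + |\bold L|) = G(\bold b, d),$$
where $G(\bold b, d)$ is $\frac{1}{\bold b!\,d!}$ times the right-hand side of the first identity. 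The coefficients $\alpha_{\bold L} := \frac{(-1)^{\|\bold L\|}}{\bold L!\,|\bold L|!}$ here play the role of $\beta_{\bold L}$ in Lemma 2.1 (note $|\bold L|$ enters as the shift because the substituted insertions were $\tau_{|\bold m|+1}$, raising the index by $|\bold L|$ overall), so the inverse family $\gamma_{\bold L}$ is characterized by $\sum_{\bold L + \bold{L'} = \bold b}\alpha_{\bold L}\gamma_{\bold{L'}}/(\dots) = 0$; unwinding the normalizations gives exactly the stated recursion $\sum_{\bold L + \bold{L'} = \bold b}\frac{(-1)^{\|\bold L\|}\gamma_{\bold L}}{\bold L!\,\bold{L'}!\,|\bold{L'}|!} = 0$ with $\gamma_{\bold 0} = 1$. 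Lemma 2.1 then yields $F(\bold b, d) = \sum_{\bold L + \bold{L'} = \bold b}\gamma_{\bold L}\cdot(\text{normalization})\cdot G(\bold{L'}, d + |\bold L|)$, and clearing denominators produces the second display of the theorem, with the two $\gamma_{\bold L}$-weighted sums over $\bold L + \bold{L'} = \bold b$ and the factorial coefficients $\frac{(d+d_0+|\bold L|)!}{d_0!\,d!}$ and $\frac{(d_j+d+|\bold L|-1)!}{(d_j-1)!\,d!}$ appearing exactly as written.

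The main obstacle I anticipate is the third-group recombination in the first identity: tracking the multinomial coefficients and the over-counting factor when one block $\bold m_i$ is singled out as $\bold L$, then shifting the summation index $k$, and checking that the resulting coefficient is $-(-1)^{\|\bold L\|}\binom{\bold b}{\bold L}\frac{(d+|\bold L|)!}{|\bold L|!}$ with no stray factor. This is the same ``only the [relevant] term needs a careful verification'' remark that appears in the proof of Theorem 1.1, and it is genuinely the step where a sign or a factorial can go wrong; everything else is formal. A secondary but purely bookkeeping point is making sure the degree constraint $d_j \geq 1$ (so that $\tau_{d_j+d-1}$ with the $-1$ shift and the binomial $\binom{d_j+d-1}{d_j-1}$ make sense) is preserved when the $\tau_{|\bold m_i|+1}$ insertions — which all have index $\geq 1$ — are fed into Lemma 5.5; since $|\bold m_i| + 1 \geq 1$ this is automatic, but it should be stated. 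Finally, Corollary 5.4's analogue here would follow by specializing to $\bold b = (l, 0, 0, \dots)$ and $\bold b = \bm{\delta}_l$ and solving the scalar recursion, though the theorem statement does not ask for it.
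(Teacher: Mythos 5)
Your proposal is correct and is essentially the paper's own (implicit) proof: the paper gives no separate argument for Theorem 5.6 but obtains it exactly as Theorems 1.1 and 1.2, with Lemma 5.5 replacing the Witten--Kontsevich/DVV input — expand $\kappa(\bold b)$ by Lemma 2.2, apply the pure-$\psi$ recursion, recombine the extra $\tau_{|\bold m_i|+1}$ terms, and invert with Lemma 2.1 — which is precisely your plan, including the $(k+1)$ overcounting factor and the characterization of $\gamma_{\bold L}$. Two small corrections to your write-up: the third-group recombination you describe actually yields $\tau_{d+|\bold L|}\tau_{d_0}$ (not $\tau_{d}\tau_{d_0}$) on the left-hand side of the first identity, matching the pattern of Theorem 5.3 — the $\tau_d$ in the printed statement is a typo which you transcribed, and as literally written the $\bold L\neq\bold 0$ summands would vanish for degree reasons; and in the Lemma 2.1 step the function must absorb the factorial of the $\tau$-index, i.e. $F(\bold b,d)=\frac{d!}{\bold b!}\langle\tau_d\tau_{d_0}\prod_{j=1}^n\tau_{d_j}\kappa(\bold b)\mid\lambda_g\rangle_g$ (the analogue of the factor $(2d_1+1)!!$ in the paper's proof of Theorem 1.2), since with your normalization $F=\frac{1}{\bold b!}\langle\cdots\rangle$ the displayed reduction leaves a stray factor $(d+|\bold L|)!/d!$ and the coefficients would not depend on $\bold L$ alone.
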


We recall the definition of the Bessel functions of the first kind.
For the Bessel equations of order $\nu$
$$x^2y''+xy'+(x^2-\nu^2)y=0,$$
we have the following solutions
$$y=J_{\nu}(x)=\sum_{k=0}^\infty\frac{(-1)^k}{k!\Gamma(\nu+k+1)}\left(\frac{x}{2}\right)^{\nu+2k}.$$
These are called Bessel functions of the first kind of order
$\nu$.

\begin{corollary}In Theorem 5.6, we have
$$\gamma(\underbrace{0,\dots,0,1}_{l})=\frac{1}{l!}$$
and $\gamma_l$ is given by
$$\frac{1}{J_0(\sqrt{4x})}=\sum_{k=0}^{\infty}\frac{\gamma_k}{k!}x^k=1+x+\frac{3/2}{2!}x^2+\frac{19/6}{3!}x^3+
\frac{211/24}{4!}x^4+\frac{1217/40}{5!}x^5+\cdots,$$ where $J_0$
is the Bessel function of the first kind of order zero
$$J_0(x)=\sum_{k=0}^\infty\frac{(-1)^k}{4^k(k!)^2}x^{2k}.$$
\end{corollary}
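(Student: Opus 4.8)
The plan is to extract the generating-function identity for the constants $\gamma_{\bold L}$ appearing in Theorem 5.6 exactly as was done for Corollary 5.4, but now with the factorials $(2|\bold{L'}|-1)!!$ replaced by $|\bold{L'}|!$. Recall that in Theorem 5.6 the constants are defined by
$$\sum_{\bold L+\bold{L'}=\bold b}\frac{(-1)^{||\bold L||}\gamma_{\bold L}}{\bold L!\,\bold{L'}!\,|\bold{L'}|!}=0,\qquad \bold b\neq 0,$$
with $\gamma_{\bold 0}=1$. First I would specialize to the one-parameter family $\bold b=(m,0,0,\dots)$, i.e. $\bold b=m\bm{\delta}_1$, so that $|\bold L|=||\bold L||$ for all $\bold L\leq\bold b$ and the defining relation becomes the scalar convolution
$$\sum_{j=0}^{m}\frac{(-1)^{m-j}\gamma_j}{j!\,(m-j)!\,(m-j)!}=0,\qquad m\geq 1.$$
Introducing the exponential generating function $C(x):=\sum_{k\geq 0}\frac{\gamma_k}{k!}x^k$ and recalling that $\sum_{k\geq 0}\frac{(-1)^k}{(k!)^2}x^k = J_0(\sqrt{4x})$, the relation above says precisely that the Cauchy product $C(x)\cdot J_0(\sqrt{4x})$ has all its coefficients in degree $\geq 1$ equal to zero, while the degree-zero coefficient is $\gamma_{\bold 0}=1$. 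Hence $C(x)\,J_0(\sqrt{4x})=1$, which is the claimed identity $1/J_0(\sqrt{4x})=\sum_{k\geq 0}\frac{\gamma_k}{k!}x^k$. The displayed numerical coefficients then follow by expanding $J_0(\sqrt{4x})=1-x+\tfrac14 x^2-\tfrac1{36}x^3+\cdots$ and inverting the power series termwise, a routine check.

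For the second assertion, $\gamma(0,\dots,0,1)=\frac1{l!}$ with the $1$ in the $l$-th slot, I would plug $\bold b=\bm{\delta}_l$ directly into the defining recursion. The only decompositions $\bold L+\bold{L'}=\bm{\delta}_l$ are $(\bold L,\bold{L'})=(\bold 0,\bm{\delta}_l)$ and $(\bm{\delta}_l,\bold 0)$, giving
$$\frac{\gamma_{\bold 0}}{\bm{\delta}_l!\,l!}+\frac{(-1)^{1}\gamma_{\bm{\delta}_l}}{\bm{\delta}_l!\,1}=0,$$
since $|\bm{\delta}_l|=l$ and $|\bold 0|=0$ while $\bm{\delta}_l!=1$. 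This immediately yields $\gamma_{\bm{\delta}_l}=\frac1{l!}$, using $\gamma_{\bold 0}=1$. This part is essentially a one-line substitution once one is careful about which index conventions ($|\bold{L'}|$ versus $||\bold{L'}||$, and the factorial $\bm{\delta}_l!=1$) are in force.

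The only real subtlety — and the step I would flag as the main obstacle — is justifying that knowing $\gamma_{\bold L}$ on the two slices $\bold b=m\bm{\delta}_1$ and $\bold b=\bm{\delta}_l$ suffices for the statement of the corollary: the corollary as stated only asserts values on these slices, so no full multivariate inversion is needed, but one must be sure the recursion in Theorem 5.6 really does restrict consistently to the slice $\bold b=m\bm{\delta}_1$ (it does, because if $\bold b$ is supported on the first coordinate then so is every $\bold L\leq\bold b$, and $|\bold L|=||\bold L||$ there, so the multivariate convolution collapses to the scalar one above). Granting that, the identification with $1/J_0(\sqrt{4x})$ is forced, and the proof is complete.
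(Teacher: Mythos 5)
Your proposal is correct and follows essentially the same route as the paper: restrict the defining relation of Theorem 5.6 to sequences supported in the first slot, read the resulting convolution as the statement that $\bigl(\sum_{k\geq0}\gamma_k x^k/k!\bigr)\,J_0(\sqrt{4x})=1$ via $J_0(\sqrt{4x})=\sum_{k\geq0}(-1)^k x^k/(k!)^2$, and obtain $\gamma(\bm{\delta}_l)=1/l!$ by direct substitution of $\bold b=\bm{\delta}_l$ into the recursion. The only cosmetic discrepancy is your sign $(-1)^{m-j}$ in the scalar relation versus the paper's $(-1)^{j}$ attached to $\gamma_j$, which differ by the overall factor $(-1)^m$ and hence give the same (zero) relation, so the argument stands as written.
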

\begin{proof}
The corollary follows easily from Theorem 5.6 and the following
$$J_0(\sqrt{4x})=\sum_{k=0}^\infty\frac{(-1)^k}{(k!)^2}x^k.$$
\end{proof}

It is interesting to notice that the Bessel function of the first
kind of order zero also appears in Manin and Zograf's work \cite{MZ}
on asymptotics for Weil-Petersson volumes.

$$ \ \ \ \ $$

\end{document}